\newcommand{\RR}{\mathbb R}
\newtheorem{theorem}{Theorem}[section]
\newtheorem{corollary}[theorem]{Corollary}
\newtheorem{lemma}[theorem]{Lemma}
\newtheorem{proposition}[theorem]{Proposition}
\newtheorem{definition}[theorem]{Definition}
\newcommand{\mysection}[1]{\section{#1} \setcounter{equation}{0}}
\newcommand{\e}{\varepsilon}
\begin{document}

\title[Convergence for a degenerate parabolic equation with gradient source]{Convergence to separate variables solutions for a degenerate parabolic equation with gradient source}

\author{Philippe Lauren\c cot}
\address{Institut de Math\'ematiques de Toulouse, CNRS UMR~5219, Universit\'e de Toulouse, F--31062 Toulouse Cedex 9, France}
\email{laurenco@math.univ-toulouse.fr}
\author{Christian Stinner}
\address{Fakult\"at f\"ur Mathematik, Universit\"at Duisburg-Essen, D--45117 Essen, Germany}
\email{christian.stinner@uni-due.de}

\date{\today}

\begin{abstract}
The large time behaviour of nonnegative solutions to a quasilinear degenerate diffusion equation with a source term depending solely on the gradient is investigated. After a suitable rescaling of time, convergence to a unique profile is shown for global solutions. The proof relies on the half-relaxed limits technique within the theory of viscosity solutions and on the construction of suitable supersolutions and barrier functions to obtain optimal temporal decay rates and boundary estimates. Blowup of weak solutions is also studied.
\end{abstract}

\maketitle

\mysection{Introduction}\label{sec:int}

Qualitative properties of nonnegative solutions to
\begin{eqnarray}
\partial_t u -\Delta_p u & = &  |\nabla u|^q\,, \qquad (t,x)\in Q:= (0,\infty)\times\Omega\,, \label{a0a}\\
u & = & 0\,, \qquad (t,x)\in (0,\infty)\times\partial\Omega\,, \label{a0b} \\
u(0) & = & u_0\,, \quad x\in\Omega\,, \label{a0c}
\end{eqnarray}
vary greatly according to the relative strength of the (possibly nonlinear and degenerate) diffusion $\Delta_p u := \mbox{ div }\left( |\nabla u|^{p-2}\ \nabla u \right)$ and the source term $|\nabla u|^q$ which is measured by the exponents $p\ge 2$ and $q>0$. More precisely, if $q\in (0,p-1)$, the comparison principle fails to be valid for the corresponding stationary equation \cite{BB01} and the existence of non-zero steady states is expected. The latter is known to be true for $p=2$ and $q\in (0,1)$ for a general bounded domain $\Omega$ \cite{BDD92,Lau07} and for $p>2$ and $q\in (0,p-1)$ if $\Omega=B(0,1)$ is the open unit ball of $\RR^N$ \cite{BLS10,Sti10}. A complete classification of nonnegative steady states seems nevertheless to be lacking in general, except in space dimension $N=1$ \cite{Lau07,Sti10} and when $\Omega=B(0,1)$ for radially symmetric solutions \cite{BLS10}. In these two particular cases, there is a one-parameter family $(w_\vartheta)_{\vartheta\in [0,1]}$ of stationary solutions to \eqref{a0a}-\eqref{a0b} with the properties $w_0=0$ and $w_\vartheta< w_{\vartheta'}$ in $\Omega$ if $\vartheta<\vartheta'$. In addition, each nonnegative solution to \eqref{a0a}-\eqref{a0c} converges as $t\to\infty$ to one of these steady states \cite{BLS10,Lau07,Sti10} and the available classification of the steady states plays an important role in the convergence proof. The classification of nonnegative steady states to \eqref{a0a}-\eqref{a0b} and the large time behaviour of nonnegative solutions to \eqref{a0a}-\eqref{a0c} thus remain unsolved problems when $q\in (0,p-1)$ and $\Omega$ is an arbitrary bounded domain of $\RR^N$, $N\ge 2$.

The situation is more clear for $q\ge p-1$ as the comparison principle \cite{BB01} guarantees that zero is the only stationary solution to \eqref{a0a}-\eqref{a0b}. Convergence to zero of nonnegative solutions to \eqref{a0a}-\eqref{a0c} is then expected in that case but the dynamics turn out to be more complicated as the gradient source term $|\nabla u|^q$ induces finite time blowup for some solutions. More precisely,  when $p=2$, global existence and convergence to zero for large times of solutions to \eqref{a0a}-\eqref{a0c} are shown in \cite{BDHL07,SZ06,Tch10} when either $q\in [1,2]$ or $q>2$ and $\|u_0\|_{C^1}$ is sufficiently small. The smallness condition on $u_0$ for $p=2$ and $q>2$ cannot be removed as finite time gradient blowup occurs for ``large'' initial data in that case \cite{So02}.  The blowup of the gradient then takes place on the boundary of $\Omega$ \cite{SZ06} and additional information on the blowup rate and location of the blowup points are provided in \cite{GH08,LS10}. In addition, the continuation of solutions after the blowup time is studied in \cite{BDL04} within the theory of viscosity solutions. Coming back to the convergence to zero of global solutions, still for $p=2$, the temporal decay rate and the limiting profile are identified in \cite{BDHL07} when $q\in (1,2]$ and shown to be that of the linear heat equation.

To our knowledge, the slow diffusion case $p>2$ has not been studied and the main purpose of this paper is to investigate what happens when $q\ge p-1$ and $p>2$. Our results may be summarized as follows: let $\Omega$ be a bounded domain of $\RR^N$ with smooth boundary $\partial\Omega$ (at least $C^2$) and consider an initial condition $u_0$ having the following properties:
\begin{equation}\label{a1}
u_0\in C_0(\bar{\Omega}) := \{ z \in C(\bar{\Omega})\ :\ z=0 \;\;\mbox{ on }\;\; \partial\Omega \}\,, \quad u_0\ge 0\,, \quad u_0\not\equiv 0\,.
\end{equation}
Then
\begin{description}
\item[(a)] if $q=p-1$, there is a unique global (viscosity) solution $u$ to \eqref{a0a}-\eqref{a0c} and $t^{1/(p-2)} u(t)$ converges as $t\to\infty$ to a unique profile $f$ which does not depend on $u_0$. In addition, $u_\infty: (t,x)\longmapsto t^{-1/(p-2)}\ f(x)$ is the unique solution to \eqref{a0a}-\eqref{a0b} with an initial condition identically infinite in $\Omega$, see Theorem~\ref{tha1} below. The availability of solutions having infinite initial value in $\Omega$ (also called \textit{friendly giants}) and their stability are well-known for the porous medium equation $\partial_t z - \Delta z^m=0$, $m>1$, the $p$-Laplacian equation $\partial_t z - \Delta_p z=0$, $p>2$, and some related equations sharing a similar variational structure, see \cite{MV94,RVPV09,Va07} for instance, but also for the semilinear diffusive Hamilton-Jacobi equation with gradient absorption $\partial_t z - \Delta z + |\nabla z|^q=0$, $q>1$ \cite{CLS89}.
\item[(b)] if $q\in (p-1,p]$, there is a unique global (viscosity) solution $u$ to \eqref{a0a}-\eqref{a0c} and $t^{1/(p-2)} u(t)$ converges as $t\to\infty$ to a unique profile $f_0$ which does not depend on $u_0$. In that case, $(t,x)\longmapsto t^{-1/(p-2)}\ f_0(x)$ is the unique solution to the $p$-Laplacian equation $\partial_t z - \Delta_p z=0$ with homogeneous Dirichlet boundary conditions and an initial condition identically infinite in $\Omega$, see Theorem~\ref{tha2} below. Therefore, the gradient source term $|\nabla u|^q$ does not show up in the large time dynamics.
\item[(c)] if $q>p$ and $u_0$ is sufficiently small, there is a unique global (viscosity) solution $u$ to \eqref{a0a}-\eqref{a0c} and $t^{1/(p-2)} u(t)$ converges as $t\to\infty$ to $f_0$ as in the previous case, see Theorem~\ref{tha2} below.
\item[(d)] if $q>p$ and $u_0$ is sufficiently large, then there is no global Lipschitz continuous weak solution to \eqref{a0a}-\eqref{a0c}, see Proposition~\ref{prz1} below. Let us point out that, since the notion of solution used for this result differs from that used for the previous cases, it only provides an indication that the smallness condition is needed in case~\textbf{(c)}.
\end{description}

Before stating precisely the main results, we point out that \eqref{a0a} is a quasilinear degenerate parabolic equation which is unlikely to have classical solutions. It turns out that a suitable framework for the well-posedness of the initial-boundary value problem \eqref{a0a}-\eqref{a0c} is the theory of viscosity solutions (see, e.g., \cite{BdCD97,Bl94,CIL92}) and we first define the notion of solutions to be used throughout this paper.

\begin{definition}\label{defa0}
Consider $u_0\in C_0(\bar{\Omega})$ satisfying \eqref{a1}. A function $u\in C([0,\infty)\times\bar{\Omega})$ is a solution to \eqref{a0a}-\eqref{a0c} if $u$ is a viscosity solution to \eqref{a0a} in $Q$ and satisfies
$$
u(t,x) = 0\,, \quad (t,x)\in (0,\infty)\times\partial\Omega \;\;\mbox{ and }\;\; u(0,x)=u_0(x)\,, \quad x\in\bar{\Omega}\,.
$$
\end{definition}

We begin with the case $p>2$ and $q=p-1$.

\begin{theorem}\label{tha1}
Assume that $p>2$, $q=p-1$, and consider $u_0\in C_0(\bar{\Omega})$ satisfying \eqref{a1}. Then, there is a unique solution $u$ to \eqref{a0a}-\eqref{a0c} in the sense of Definition~\ref{defa0} and
\begin{equation}\label{a5}
\lim_{t\to\infty} \left\| t^{1/(p-2)}\ u(t) - f \right\|_\infty = 0\,,
\end{equation}
where $f\in C_0(\bar{\Omega})$ is the unique positive solution to
\begin{equation}\label{a6}
- \Delta_p f - |\nabla f|^{p-1} - \frac{f}{p-2} =0 \;\;\mbox{ in }\;\; \Omega, \quad f>0 \;\;\mbox{ in }\;\;\Omega\,, \quad f=0 \;\;\mbox{ on }\;\; \partial\Omega\,.
\end{equation}
Moreover, $f\in W^{1,\infty}(\Omega)$.
\end{theorem}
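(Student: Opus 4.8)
The plan is to combine the standard viscosity well-posedness machinery for \eqref{a0a}--\eqref{a0c} with an analysis of the time-rescaled equation, and to identify the profile as the unique positive solution of \eqref{a6}; a comparison principle is \emph{not} available for \eqref{a6} because of the wrong sign of the zeroth order term $f/(p-2)$, and this obstruction is circumvented by the exact homogeneity of the problem. For the well-posedness part, since $\xi\mapsto|\xi|^{p-1}$ is locally Lipschitz and \eqref{a0a} has no zeroth order term, a comparison principle holds and Perron's method produces a unique viscosity solution $u\in C([0,\infty)\times\bar\Omega)$ of \eqref{a0a}--\eqref{a0c}, using barriers at $t=0$ built from $u_0\in C_0(\bar\Omega)$ (sup-/inf-convolutions) and flat barriers on $(0,\infty)\times\partial\Omega$ (recall $\partial\Omega\in C^2$); comparison with $0$ and with the constant $\|u_0\|_\infty$ gives $0\le u\le\|u_0\|_\infty$, so $u$ is global, and comparison with the solution $z$ of $\partial_t z-\Delta_p z=0$ with the same data yields $u\ge z>0$ in $(0,\infty)\times\Omega$ together with the boundary estimate $u(t_1,\cdot)\ge\rho(t_1)\,\mathrm{dist}(\cdot,\partial\Omega)$ for each $t_1>0$ (a known lower barrier estimate for the $p$-Laplacian flow).

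Next I would treat \eqref{a6} by the sub-/supersolution method for the $p$-Laplacian: a small multiple of the first $p$-eigenfunction of a ball $B\subset\subset\Omega$, extended by $0$, is a subsolution, and $x\mapsto C-e^{x_1+b}$ is, for $b$ large and a suitable $C$, a \emph{positive} supersolution of \eqref{a6} (short computations, both using $p>2$). This yields $f\in W^{1,p}_0(\Omega)\cap L^\infty(\Omega)$ with $0\le f\le C$; the strong minimum principle for $p$-superharmonic functions gives $f>0$ in $\Omega$, while comparison with the barrier $w=A(d-\mu d^2)$ (with $d=\mathrm{dist}(\cdot,\partial\Omega)$ and $\mu,A$ large), a supersolution of \eqref{a6} in a collar of $\partial\Omega$, gives $0\le f\le A\,d$ there; with Hopf's lemma this upgrades to $c\,d\le f\le C\,d$. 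Interior $C^{1,\alpha}$ regularity (bounded solutions of quasilinear equations with natural gradient growth) and boundary $C^{1,\alpha}$ regularity (for $-\Delta_p f\in L^\infty$, $f=0$ on $\partial\Omega$) then give $f\in W^{1,\infty}(\Omega)$. Uniqueness uses the identity $-\Delta_p(\lambda f)-|\nabla(\lambda f)|^{p-1}-\lambda f/(p-2)=\frac{\lambda^{p-1}-\lambda}{p-2}\,f$, which is $\ge 0$ and $>0$ in $\Omega$ for $\lambda>1$: given two positive solutions $f_1,f_2$, the bounds $c\,d\le f_i\le C\,d$ make $\lambda^*:=\inf\{\lambda\ge1:f_2\le\lambda f_1\}$ finite, and if $\lambda^*>1$ a contact point of $f_2$ and $\lambda^* f_1$, interior or on $\partial\Omega$, contradicts respectively the strong comparison principle and Hopf's lemma for the $p$-Laplacian; hence $f_2\le f_1$ and, symmetrically, $f_1=f_2=:f$.

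Finally, because $q=p-1$ the function $u_\infty(t,x):=t^{-1/(p-2)}f(x)$ solves \eqref{a0a}--\eqref{a0b} with $u_\infty(0^+,\cdot)\equiv+\infty$ in $\Omega$; comparing $u(\delta+\cdot)$ with $u_\infty$ on $(0,\infty)\times\Omega$ and letting $\delta\to0$ gives the sharp bound $u(t,x)\le t^{-1/(p-2)}f(x)$. For the converse, the boundary estimate on $u(t_1,\cdot)$ and $f\le C\,d$ give $u(t_1,\cdot)\ge\sigma f$ for some $\sigma>0$, so comparison with the exact solution $(t+\theta)^{-1/(p-2)}f$, where $(t_1+\theta)^{-1/(p-2)}=\sigma$, yields $u(t,x)\ge(t+\theta)^{-1/(p-2)}f(x)$ for $t\ge t_1$. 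Consequently, for $t\ge t_1$ and $x\in\bar\Omega$,
\[
\Big(\tfrac{t}{t+\theta}\Big)^{1/(p-2)}f(x)\ \le\ t^{1/(p-2)}u(t,x)\ \le\ f(x),
\]
and \eqref{a5} follows since $f$ is bounded. The same conclusion can be organised through half-relaxed limits: the substitution $v(\tau,x)=(1+t)^{1/(p-2)}u(t,x)$, $\tau=\ln(1+t)$, turns \eqref{a0a} into the autonomous equation $\partial_\tau v=\Delta_p v+|\nabla v|^{p-1}+v/(p-2)$ whose stationary solution is $f$, and the uniform bounds above make the upper and lower relaxed limits of $v$ as $\tau\to\infty$ a sub- and a supersolution of \eqref{a6} vanishing on $\partial\Omega$, which the sweeping argument of the previous paragraph forces to coincide with $f$.

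The main difficulty is concentrated in the elliptic step: because \eqref{a6} carries a zeroth order term with the ``wrong'' sign, neither its solvability nor---crucially---the uniqueness of its positive solution can be obtained from a comparison principle, and one must instead exploit the exact $(p-1)$-homogeneity of the balance between $-\Delta_p$, $|\nabla f|^{p-1}$ and $f/(p-2)$ through the sweeping argument. Tightly linked to this are the boundary constructions---the positive exponential supersolution, the super-barrier $A(d-\mu d^2)$, and the Hopf-type lower barrier---which control the degeneracy of $\Delta_p$ near $\partial\Omega$ and ultimately deliver both the optimal temporal rate $t^{-1/(p-2)}$ and the regularity $f\in W^{1,\infty}(\Omega)$.
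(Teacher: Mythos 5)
Your proposal diverges from the paper in several structural respects, and while parts of it are a legitimately different route, there is one genuine gap that would need to be filled.

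On the elliptic problem \eqref{a6}: you obtain existence by the sub-/supersolution method and uniqueness by a sweeping argument built on the identity $-\Delta_p(\lambda f)-|\nabla(\lambda f)|^{p-1}-\lambda f/(p-2)=\frac{\lambda^{p-1}-\lambda}{p-2}f$. The paper instead gets existence as a \emph{byproduct} of the half-relaxed limits (the common value $f:=w_*=w^*$ is shown to be a positive viscosity solution of \eqref{a6}) and uniqueness from Lemma~\ref{leb1}, which is precisely a comparison principle for \eqref{b1}: bounded usc subsolution $\le$ bounded lsc supersolution whenever both vanish on $\partial\Omega$ and the supersolution is positive in $\Omega$. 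Your opening claim that ``a comparison principle is \emph{not} available for \eqref{a6}'' is therefore misleading: such a principle \emph{is} available in this restricted form, and the paper proves it by exploiting exactly the same homogeneity you invoke, but through the associated parabolic equation (the functions $z_n(t,x)=(t+s)^{-1/(p-2)}w(x)-s^{-1/(p-2)}\eta_n$ and $Z_\delta(t,x)=(t+\delta)^{-1/(p-2)}W(x)$ are compared via \cite[Theorem~8.2]{CIL92}). Your elliptic sweeping route is plausible but needs nontrivial ingredients (Hopf's lemma and a strong comparison principle for the degenerate $p$-Laplacian in the presence of a first-order term, with the usual delicacy at points where $\nabla f$ may vanish); the paper's parabolic detour is cleaner precisely because it dispenses with these pointwise tools and applies a single semicontinuous comparison theorem.

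The genuine gap is in the convergence step and concerns boundary estimates. Your direct sandwich requires starting the comparison $u(t_1,\cdot)\le K f(\cdot)$ (to launch the upper bound via $(t+\theta')^{-1/(p-2)}f$) and $u(t_1,\cdot)\ge\sigma f(\cdot)$ (for the lower bound). The latter you obtain from a lower barrier for the $p$-Laplacian flow together with $f\le C\,d$. For the former, however, near $\partial\Omega$ you need an \emph{upper} Lipschitz bound $u(t_1,x)\le L\,d(x)$ combined with the Hopf estimate $f\ge c\,d$; without this, the initial ordering against the friendly giant cannot be checked in a collar of $\partial\Omega$, since both sides vanish there. You never establish such an upper boundary Lipschitz estimate for $u$ (the ``flat barriers'' you mention for well-posedness only give continuity, not a Lipschitz rate). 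The same issue reappears in your half-relaxed-limits variant: to conclude that $w_*$ and $w^*$ vanish on $\partial\Omega$, one needs a uniform-in-time Lipschitz estimate up to the boundary after rescaling. This is exactly what Lemma~\ref{lec3} and Corollary~\ref{coc5} provide in the paper, by an explicit exponential barrier $S_{\delta,x_0}$ constructed in a collar via the exterior sphere condition, and it is one of the nontrivial technical contributions of the proof. Supplying an analogue of this boundary Lipschitz estimate is necessary to close either version of your argument. Your deduction of $f\in W^{1,\infty}(\Omega)$ from elliptic $C^{1,\alpha}$ regularity is fine, though the paper obtains it more cheaply by passing the uniform Lipschitz bound on $v(s,\cdot)$ to the limit $s\to\infty$.
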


Let us emphasize here that Theorem~\ref{tha1} not only gives a description of the large time behaviour of $u$, but also provides the existence and uniqueness of the positive solution $f$ to \eqref{a6}. To investigate the large time behaviour of $u$, no Liapunov functional seems to be available and we instead use the half-relaxed limits technique \cite{BP88,CIL92}. To this end, several steps are needed, including a comparison principle for \eqref{a6} which is established in Section~\ref{sec:acl} and upper bounds which guarantee on the one hand that the solutions to \eqref{a0a}-\eqref{a0c} decay at the expected temporal decay rate and on the other hand that there is no loss of boundary conditions as discussed for instance in \cite{BDL04}. The latter is achieved by the construction of suitable barrier functions. Also of importance is the positivity of the half-relaxed limits which allows us to apply the comparison lemma from Section~\ref{sec:acl}.

We next turn to the case $q>p-1$ and establish the following result.

\begin{theorem}\label{tha2}
Assume that $p>2$, $q>p-1$, and consider $u_0\in C_0(\bar{\Omega})$ satisfying \eqref{a1}. If $q>p$, assume further that
\begin{equation}\label{a1a}
u_0(x) \le \frac{f(x)}{\|\nabla f\|_\infty}\,, \qquad x\in\bar{\Omega}\,,
\end{equation}
where $f$ is the unique positive solution to \eqref{a6}. Then, there is a unique solution $u$ to \eqref{a0a}-\eqref{a0c} in the sense of Definition~\ref{defa0} and
\begin{equation}\label{a5b}
\lim_{t\to\infty} \left\| t^{1/(p-2)}\ u(t) - f_0 \right\|_\infty = 0\,,
\end{equation}
where $f_0\in C_0(\bar{\Omega})$ is the unique positive solution to
\begin{equation}\label{a7}
- \Delta_p f_0 - \frac{f_0}{p-2} = 0 \;\;\mbox{ in }\;\; \Omega, \quad f_0>0 \;\;\mbox{ in }\;\;\Omega\,, \quad f_0=0 \;\;\mbox{ on }\;\; \partial\Omega\,.
\end{equation}
\end{theorem}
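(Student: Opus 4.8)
The plan is to mirror the architecture that presumably underlies Theorem~\ref{tha1} but now for the source-free limiting profile $f_0$. First I would establish the well-posedness statement: existence and uniqueness of a global viscosity solution $u$ to \eqref{a0a}-\eqref{a0c}. For $q\in(p-1,p]$ this should follow from Perron's method together with a comparison principle for \eqref{a0a}, using as a supersolution the separate-variables solution $(t,x)\longmapsto (t+\tau)^{-1/(p-2)} f_0(x)$ (or a scaled version thereof) with $\tau>0$ large enough that it dominates $u_0$ at $t=0$; the subcriticality $q\le p$ is what guarantees the gradient term does not destroy this a priori bound. For $q>p$, the smallness assumption \eqref{a1a} is precisely what is needed: one checks that $g(t,x):=(t+1)^{-1/(p-2)} f(x)/\|\nabla f\|_\infty$ is a supersolution of \eqref{a0a}, using that $f$ solves \eqref{a6} and that $\|\nabla g(t)\|_\infty\le (t+1)^{-1/(p-2)}$, so that $|\nabla g|^q \le (t+1)^{-q/(p-2)}|\nabla g|^{p-1}\le |\nabla g|^{p-1}$ for $t\ge 0$, which lets the $|\nabla f|^{p-1}$ term in \eqref{a6} absorb the source. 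Global existence and the bound $0\le t^{1/(p-2)} u(t)\le C$ then follow.

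Next I would handle \eqref{a7} itself: existence, uniqueness, and positivity of $f_0$, together with a comparison principle for the stationary operator $-\Delta_p \cdot - \cdot/(p-2)$. This is the purely elliptic analogue of the comparison lemma announced for \eqref{a6} in Section~\ref{sec:acl}, and in fact it should be easier since there is no gradient term; I expect it to be obtained either by the same argument with $q=p-1$ formally removed, or cited from the $p$-Laplacian literature (\cite{RVPV09,Va07} and the friendly-giant theory). Uniqueness of the positive solution and the comparison principle are the tools that will pin down the limit.

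With these ingredients, the convergence \eqref{a5b} is proved by the half-relaxed limits technique, exactly as for Theorem~\ref{tha1}. I would set $v(t,x):=t^{1/(p-2)} u(t,x)$, which satisfies a rescaled equation of the form $\partial_t v = (\text{something})$ in the variable $s=\log t$, or work directly with the rescaled time variable so that the profile equation becomes the stationary limit; the rescaled equation carries the source term with an extra decaying factor $t^{-(q-p+1)/(p-2)}\to 0$ since $q>p-1$, which is precisely why $|\nabla v|^q$ drops out in the limit and one lands on \eqref{a7} rather than \eqref{a6}. Define the upper and lower half-relaxed limits $\bar v$ and $\underline v$ as $t\to\infty$; standard stability of viscosity solutions shows $\bar v$ is a subsolution and $\underline v$ a supersolution of the stationary problem \eqref{a7}, with the boundary condition retained thanks to the barrier functions (the source term can only help here, or one uses the $p$-Laplacian barriers from \cite{RVPV09,Va07}). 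The a priori upper bound from the first step gives $\bar v\le C$, and a lower bound --- obtained by comparison from below with a small friendly-giant-type subsolution, using that $u_0\not\equiv 0$ forces $u(t)>0$ in $\Omega$ for $t>0$ --- gives $\underline v>0$ in $\Omega$. The comparison principle for \eqref{a7} then yields $\bar v\le f_0\le\underline v$, hence $\bar v=\underline v=f_0$, which is equivalent to the uniform convergence \eqref{a5b}.

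The main obstacle, as in Theorem~\ref{tha1}, is controlling the behaviour near $\partial\Omega$: one must rule out ``loss of boundary conditions'' so that the half-relaxed limits genuinely vanish on $\partial\Omega$ and genuinely stay positive inside, and one must secure the optimal temporal decay rate $t^{-1/(p-2)}$ from above uniformly up to the boundary. For $q\in(p-1,p]$ the supersolution $(t+\tau)^{-1/(p-2)} f_0$ already gives both the decay rate and the vanishing trace, so the delicate point is mainly the strict positivity of $\underline v$ inside $\Omega$ and the verification that $\bar v$ is a genuine (not merely generalized) subsolution up to the boundary; for $q>p$ the extra structure needed to run the barrier argument is exactly what assumption \eqref{a1a} provides. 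Once the barriers and the two-sided bounds are in place, the remainder is a routine application of the half-relaxed limits method combined with the comparison principle for \eqref{a7}.
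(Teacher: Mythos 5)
Your overall strategy — rescale to $s=\ln t$, take half-relaxed limits, and close the argument with a comparison lemma for the stationary equation — is indeed what the paper does, and your treatment of the $q>p$ well-posedness via Perron's method with the supersolution built from $f$ matches Lemma~\ref{lef1} and Proposition~\ref{prf2}. However, there are two genuine gaps in the range $q\in(p-1,p]$.

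First, your well-posedness supersolution is wrong. The function $g(t,x):=(t+\tau)^{-1/(p-2)}f_0(x)$ is an exact solution of the pure $p$-Laplacian equation, hence satisfies $\partial_t g-\Delta_p g-|\nabla g|^q=-|\nabla g|^q\le 0$: it is a \emph{subsolution} of \eqref{a0a}, not a supersolution. Scaling by $\lambda>1$ does not fix this either, because the favourable term produced by $-\Delta_p(\lambda f_0)$ is proportional to $f_0$, which vanishes linearly at $\partial\Omega$, whereas $|\nabla(\lambda f_0)|^q$ stays bounded away from zero near the boundary, so the supersolution inequality fails in a boundary layer. The paper does not attempt such a barrier for well-posedness in $q\in[p-1,p]$; instead it appeals to Da~Lio's result after verifying the boundary conditions $\Sigma_-^p=\Sigma_+^p=(0,\infty)\times\partial\Omega$ (Proposition~\ref{pry1}). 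The temporal decay bound itself comes from a separate explicit supersolution $A(1+\delta t)^{-1/(p-2)}\varphi(|x-x_0|)$ with $\varphi(r)=\frac{p-1}{p}(R^{p/(p-1)}-r^{p/(p-1)})$, which is strictly positive on $\bar\Omega$ and therefore avoids the boundary-layer problem (Lemma~\ref{lec2}).

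Second, you state that the boundary estimates needed to rule out loss of boundary conditions follow from the same supersolution; even with a correct barrier this is nontrivial, and you do not address the case $q=p$, which the paper singles out as genuinely harder. The exponential barrier used for $q\in[p-1,p)$ in Lemma~\ref{lec3} fails for $q=p$ because the offending term $A^{q-p+1}\delta^{p-q}$ can no longer be made small by shrinking $\delta$; the paper circumvents this by the change of variables $h=e^{u/(p-1)}-1$, which transforms \eqref{a0a} into a doubly nonlinear equation without a gradient source (Lemma~\ref{lec4}). This step is missing from your outline. Finally, a smaller point: your proposed lower bound on the lower half-relaxed limit via ``a small friendly-giant-type subsolution'' is vaguer than needed; the paper gets $w_*\ge f_0$ directly from the comparison $u\ge U$ with the pure $p$-Laplacian flow and the convergence of its rescaling to $f_0$ (Manfredi--Vespri), after which only the one-sided inequality $w^*\le f_0$ via Lemma~\ref{leb3} is required.
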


For $q\in[p-1,p]$, the well-posedness of \eqref{a0a}-\eqref{a0c} easily follows from \cite{dL02} as already noticed in \cite{BDL04} for $p=2$. For $q>p$ and an initial condition $u_0$ satisfying \eqref{a1a}, it is a consequence of the Perron method and the comparison principle \cite{CIL92}. As for the large time behaviour, the existence and uniqueness of $f_0$ is shown in \cite{MV94} and the main contribution of Theorem~\ref{tha2} is the convergence \eqref{a5b}. The convergence proof follows the same lines as that of Theorem~\ref{tha1} but a new difficulty has to be overcome in the case $q=p$ for the boundary estimates. We also show that, when $q\in (p-1,p]$, powers of the positive solution $f$ to \eqref{a6} with an exponent in $(0,1]$ allow us to construct separate variables supersolutions to \eqref{a0a}-\eqref{a0b}.

Finally, the announced blowup result is proved in Section~\ref{sec:bu} by a classical argument \cite{HM04,QS07}.

\medskip

For further use, we introduce some notations: for $\xi\in\RR^N$ and $X\in\mathcal{S}(N)$, $\mathcal{S}(N)$ being the space of $N\times N$ real-valued symmetric matrices, we define the functions $F_0$ and $F$ by
\begin{eqnarray}
F_0(\xi,X) & := & - |\xi|^{p-2}\ tr(X) - (p-2)\ |\xi|^{p-4}\ \langle X \xi , \xi \rangle\,, \label{ham1} \\
F(\xi,X) & := & F_0(\xi,X) - |\xi|^q\,. \label{ham2}
\end{eqnarray}

\mysection{A comparison lemma}\label{sec:acl}

An important tool for the uniqueness of the positive solution to \eqref{a6} and the identification of the half-relaxed limits is the following comparison lemma between positive supersolutions and nonnegative subsolutions to the elliptic equation in \eqref{a6}.

\begin{lemma}\label{leb1}
Let $w\in USC(\bar{\Omega})$ and $W\in LSC(\bar{\Omega})$ be respectively a bounded upper semicontinuous (usc) viscosity subsolution and a bounded lower semicontinuous (lsc) viscosity supersolution to
\begin{equation}\label{b1}
- \Delta_p \zeta - |\nabla \zeta|^{p-1} - \frac{\zeta}{p-2} = 0 \;\;\mbox{ in }\;\; \Omega\,,
\end{equation}
such that
\begin{eqnarray}
w(x) = W(x) = 0 & \mbox{ for } & x\in\partial\Omega\,, \label{b2} \\
W(x) > 0 & \mbox{ for } & x\in\Omega\,. \label{b3}
\end{eqnarray}
Then
\begin{equation}\label{b4}
w \le W \;\;\mbox{ in }\;\; \bar{\Omega}\,.
\end{equation}
\end{lemma}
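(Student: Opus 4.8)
The plan is to prove the comparison principle \eqref{b4} by the classical doubling-of-variables technique of the theory of viscosity solutions, taking care of the degeneracy of the $p$-Laplacian and of the first-order term $|\nabla\zeta|^{p-1}$, and crucially exploiting the strict positivity \eqref{b3} of the supersolution $W$ in the interior. First I would argue by contradiction and assume that $M := \sup_{\bar\Omega}(w-W) > 0$. Since $w$ is usc, $W$ is lsc and $\bar\Omega$ is compact, this supremum is attained at some point, and by the boundary conditions \eqref{b2} any maximum point lies in the open set $\Omega$. A standard difficulty is that the first-order term $|\nabla\zeta|^{p-1}$ is not Lipschitz in $\zeta$ but only in $\nabla\zeta$; however, because the equation is autonomous in $\zeta$ apart from the linear ``good-sign'' term $-\zeta/(p-2)$ (which is monotone in the right direction for comparison), this causes no essential trouble. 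To handle the possible lack of strict monotonicity one may first perform the usual shift trick: for $\lambda\in(0,1)$ close to $1$, set $w_\lambda := \lambda w$ and check that $w_\lambda$ is a subsolution of a perturbed equation, so that it suffices to prove $w_\lambda\le W$ and then let $\lambda\to1$; this makes the comparison strict and is compatible with the $p$-homogeneity of $\Delta_p$ and of $|\nabla\zeta|^{p-1}$.

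Next I would double the variables: for $\e>0$ introduce
\begin{equation*}
\Phi_\e(x,y) := w_\lambda(x) - W(y) - \frac{|x-y|^2}{2\e}\,,
\end{equation*}
and let $(x_\e,y_\e)\in\bar\Omega\times\bar\Omega$ be a maximum point of $\Phi_\e$. The classical lemma on penalisation (see \cite{CIL92}) gives $|x_\e-y_\e|^2/\e\to0$, $x_\e,y_\e\to\hat x$ (along a subsequence) for some maximum point $\hat x$ of $w_\lambda-W$, and $\Phi_\e(x_\e,y_\e)\to M_\lambda:=\sup_{\bar\Omega}(w_\lambda-W)>0$ for $\lambda$ close enough to $1$. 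Since $\hat x\in\Omega$ and $W(\hat x)>0$, for small $\e$ the points $x_\e,y_\e$ stay in a compact subset of $\Omega$ on which $W\ge\delta>0$, so we never touch the boundary and the interior equations \eqref{b1} apply at $x_\e$ and $y_\e$. Then the Theorem on Sums (Theorem~3.2 in \cite{CIL92}) produces $X_\e,Y_\e\in\mathcal S(N)$ with $X_\e\le Y_\e$ and $(p_\e,X_\e)\in\bar J^{2,+}w_\lambda(x_\e)$, $(p_\e,Y_\e)\in\bar J^{2,-}W(y_\e)$, where $p_\e := (x_\e-y_\e)/\e$.

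The heart of the argument is then to write the two viscosity inequalities at $(x_\e,p_\e,X_\e)$ and $(y_\e,p_\e,Y_\e)$ using the operators $F_0$ and $F$ from \eqref{ham1}--\eqref{ham2} (more precisely their continuous extension handling $p_\e=0$: note if $p_\e=0$ then both inequalities degenerate and give $w_\lambda(x_\e)\le0\le W(y_\e)$, contradicting $M_\lambda>0$, so one may assume $p_\e\ne0$), subtract them, and pass to the limit $\e\to0$. The subtraction yields
\begin{equation*}
\frac{w_\lambda(x_\e)-W(y_\e)}{p-2} \le F_0(p_\e,X_\e) - F_0(p_\e,Y_\e) + |p_\e|^{p-1} - \lambda^{p-1}|p_\e|^{p-1} + (\text{terms from the }\lambda\text{-perturbation})\,.
\end{equation*}
Here the key structural fact is the standard ellipticity/monotonicity estimate for the normalized $p$-Laplacian operator: because $X_\e\le Y_\e$ one has $F_0(p_\e,X_\e)-F_0(p_\e,Y_\e)\le 0$ whenever the matrices come out of the Theorem on Sums applied to the quadratic penalisation (this uses $-|\xi|^{p-2}\mathrm{tr}(Z)-(p-2)|\xi|^{p-4}\langle Z\xi,\xi\rangle\ge0$ for $Z:=Y_\e-X_\e\ge0$, i.e. the map $X\mapsto F_0(\xi,X)$ is degenerate elliptic). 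The remaining gradient term contributes $(1-\lambda^{p-1})|p_\e|^{p-1}\le|p_\e|^{p-1}$, which a priori is not controlled since $|p_\e|$ may blow up; this is exactly the main obstacle. I expect to resolve it precisely as in the treatment of such first-order terms: the quantity $|p_\e|^{p-1}$ appears with the same coefficient on both sides before subtracting (it is the term $-|\xi|^q$ with $q=p-1$ evaluated at the \emph{same} slope $p_\e$ in both inequalities), so it cancels exactly up to the $\lambda$-perturbation, and the leftover $(1-\lambda^{p-1})|p_\e|^{p-1}$ must be absorbed. To control it one refines the penalisation, e.g. replacing $|x-y|^2/(2\e)$ by $|x-y|^4/(4\e)$ or by $\e\,e^{-|x-y|^2}$-type terms, or — more in the spirit of degenerate diffusion comparison — one uses that $|p_\e|^{p-2}$ times the difference of the Hessian terms already provides a favorable sign strong enough to dominate the lower-order gradient perturbation after choosing $\lambda$ close to $1$ depending on $\e$; passing to the limit $\e\to0$ first (keeping $\lambda$ fixed) and then $\lambda\to1$ then yields $M_\lambda\le0$, hence $M\le0$, a contradiction. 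I would therefore organize the estimate so that the limit $\e\to0$ gives $0<M_\lambda/(p-2)\le\liminf_{\e\to0}F_0(p_\e,X_\e)-F_0(p_\e,Y_\e)\le0$ directly, the gradient terms having cancelled, which is the cleanest route and makes the strict positivity \eqref{b3} the only delicate ingredient (it is what keeps $(x_\e,y_\e)$ away from $\partial\Omega$ so that \eqref{b1} may legitimately be used).
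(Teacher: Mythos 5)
Your proposal rests on the claim that the zeroth--order term $-\zeta/(p-2)$ in \eqref{b1} is a ``good-sign'' term, but for $p>2$ this term is \emph{improper}: writing the equation as $F(u,\xi,X)=F_0(\xi,X)-|\xi|^{p-1}-u/(p-2)=0$, one has $\partial F/\partial u=-1/(p-2)<0$, which is the \emph{wrong} monotonicity for the elliptic comparison principle. Tracking this through your doubling argument makes the defect concrete: after applying the Theorem on Sums with $X_\e\le Y_\e$, subtracting the two viscosity inequalities (with the same slope $p_\e$, so the gradient terms cancel exactly) gives
\begin{equation*}
0\le F_0(p_\e,X_\e)-F_0(p_\e,Y_\e)\le \frac{w(x_\e)-W(y_\e)}{p-2}\,,
\end{equation*}
where the first inequality is degenerate ellipticity. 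Letting $\e\to 0$ yields $0\le M/(p-2)$, which is \emph{true}, not a contradiction --- the inequality chain you wrote at the end of the proposal has the middle term on the wrong side. The $\lambda$-rescaling does not repair this because $\lambda w$ is a subsolution of \eqref{b1} with the zeroth-order coefficient $\lambda^{p-2}/(p-2)$, still of the wrong sign. You also put the positivity \eqref{b3} to work only to keep the doubled maximum points away from $\partial\Omega$, whereas \eqref{b3} must in fact carry the whole argument: without it the lemma is false in general (think of the first Dirichlet eigenfunction when $1/(p-2)$ exceeds the first eigenvalue).

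The paper circumvents the improper zeroth-order term entirely by trading the elliptic equation for the parabolic equation $\partial_t\zeta-\Delta_p\zeta-|\nabla\zeta|^{p-1}=0$, which has \emph{no} zeroth-order term. Since $\phi$ solves \eqref{b1} precisely when $(t,x)\mapsto t^{-1/(p-2)}\phi(x)$ solves this parabolic equation, one may set $z_n(t,x)=(t+s)^{-1/(p-2)}w(x)-s^{-1/(p-2)}\eta_n$ and $Z_\delta(t,x)=(t+\delta)^{-1/(p-2)}W(x)$ and invoke the standard parabolic comparison principle of \cite{CIL92}. This is where \eqref{b3} is decisive: on the interior set $\Omega_n$ one has $W\ge\mu_n>0$, so for $\delta$ small enough $Z_\delta(0,\cdot)=\delta^{-1/(p-2)}W\ge s^{-1/(p-2)}\|w\|_\infty\ge z_n(0,\cdot)$, while on the thin boundary strip $\bar\Omega\setminus\Omega_n$ the subtracted constant $s^{-1/(p-2)}\eta_n$ (with $\eta_n\to0$) makes $z_n(0,\cdot)\le 0\le Z_\delta(0,\cdot)$. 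Sending $\delta\to0$, then $n\to\infty$, then $s\to0$ and setting $t=1$ delivers $w\le W$. You should abandon the direct elliptic doubling and adopt this parabolic reformulation, or else find an argument that uses \eqref{b3} in an essential way to compensate for the improper zeroth-order term.
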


\begin{proof}
For $n\ge N_0$ large enough, $\Omega_n:= \{ x\in\Omega\ :\ d(x,\partial\Omega)>1/n\}$ is a non-empty open subset of $\Omega$. Since $\overline{\Omega_n}$ is compact and $W$ is lower semicontinuous, the function $W$ has a minimum in $\overline{\Omega_n}$ and the positivity \eqref{b3} of $W$ in $\overline{\Omega_n}$ implies that
\begin{equation}\label{b5}
\mu_n := \min_{\overline{\Omega_n}}{\{ W \}}>0\,.
\end{equation}
Similarly, the compactness of $\bar{\Omega}\setminus\Omega_n$ and the upper semicontinuity and boundedness of $w$ ensure that $w$ has at least one point of maximum $x_n$ in $\bar{\Omega}\setminus\Omega_n$ and we set
\begin{equation}\label{b6}
\eta_n := w(x_n) = \max_{\bar{\Omega}\setminus\Omega_n}{\{w\}} \ge 0\,,
\end{equation}
the maximum being nonnegative since $\partial\Omega\subset \bar{\Omega}\setminus\Omega_n$ and $w$ vanishes on $\partial\Omega$ by \eqref{b2}. We claim that
\begin{equation}\label{b7}
\lim_{n\to\infty} \eta_n=0\,.
\end{equation}
Indeed, owing to the compactness of $\bar{\Omega}$ and the definition of $\Omega_n$, there are $y\in\partial\Omega$ and a subsequence of $(x_n)_{n\ge N_0}$ (not relabeled) such that $x_n\to y$ as $n\to\infty$. Since $w(y)=0$, we deduce from the upper semicontinuity of $w$ that
$$
\lim_{\e\to 0}\ \sup{\{ w(x)\ :\ x\in B(y,\e)\cap\bar{\Omega} \}} \le 0\,.
$$
Given $\e>0$ small enough, there is $n_\e$ large enough such that $x_n\in B(y,\e)\cap\bar{\Omega}$ for $n\ge n_\e$ from which we deduce that
$$
0 \le \eta_n = w(x_n) \le \sup{\{ w(x)\ :\ x\in B(y,\e)\cap\bar{\Omega} \}}\,, \quad n\ge n_\e\,.
$$
Therefore,
$$
0 \le \limsup_{n\to\infty} \eta_n \le \sup{\{ w(x)\ :\ x\in B(y,\e)\cap\bar{\Omega} \}}\,,
$$
and letting $\e\to 0$ allows us to conclude that zero is a cluster point of $(\eta_n)_{n\ge N_0}$ as $n\to\infty$. The claim \eqref{b7} then follows from the monotonicity of $(\eta_n)_{n\ge N_0}$.

Now, fix $s\in (0,\infty)$. For $\delta>0$ and $n\ge N_0$, we define
\begin{eqnarray*}
z_n(t,x) & := & (t+s)^{-1/(p-2)}\ w(x) - s^{-1/(p-2)}\ \eta_n\,, \quad (t,x)\in [0,\infty)\times\bar{\Omega}\,, \\
Z_\delta(t,x) & := & (t+\delta)^{-1/(p-2)}\ W(x)\,, \quad (t,x)\in [0,\infty)\times\bar{\Omega}\,.
\end{eqnarray*}
It then follows from the assumptions on $w$ and $W$ that $z_n$ and $Z_\delta$ are respectively a bounded usc viscosity subsolution and a bounded lsc viscosity supersolution to
$$
\partial_t \zeta - \Delta_p \zeta - |\nabla \zeta|^{p-1} = 0 \;\;\mbox{ in }\;\; (0,\infty)\times\Omega\,,
$$
and satisfy
$$
Z_\delta(t,x) = 0 \ge - s^{-1/(p-2)}\ \eta_n = z_n(t,x)\,, \quad (t,x)\in (0,\infty)\times\partial\Omega\,.
$$
Moreover, if
\begin{equation}\label{b8}
0 < \delta < \left[ \frac{\mu_n}{1+\|w\|_\infty} \right]^{p-2}\ s\,,
\end{equation}
it follows from \eqref{b5} and \eqref{b8} that, for $x\in\Omega_n$,
$$
Z_\delta(0,x) = \delta^{-1/(p-2)}\ W(x) \ge \delta^{-1/(p-2)}\ \mu_n \ge s^{-1/(p-2)}\ \|w\|_\infty \ge z_n(0,x)\,,
$$
and from \eqref{b6} that, for $x\in\bar{\Omega}\setminus\Omega_n$,
$$
Z_\delta(0,x)\ge 0 \ge s^{-1/(p-2)}\ (w(x)-\eta_n) = z_n(0,x)\,.
$$
We are then in a position to apply the comparison principle \cite[Theorem~8.2]{CIL92} to conclude that
\begin{equation}\label{b9}
z_n(t,x)\le Z_\delta(t,x) \,, \qquad (t,x)\in [0,\infty)\times\bar{\Omega}\,,
\end{equation}
for any $\delta>0$ and $n\ge N_0$ satisfying \eqref{b8}. According to \eqref{b8}, the parameter $\delta$ can be taken to be arbitrarily small in \eqref{b9} from which we deduce that
$$
(t+s)^{-1/(p-2)}\ w(x) - s^{-1/(p-2)}\ \eta_n \le t^{-1/(p-2)}\ W(x)\,, \qquad (t,x)\in (0,\infty)\times\bar{\Omega}\,,
$$
for $n\ge N_0$. We next pass to the limit as $n\to\infty$ with the help of \eqref{b7} to conclude that
$$
(t+s)^{-1/(p-2)}\ w(x) \le t^{-1/(p-2)}\ W(x)\,, \qquad (t,x)\in (0,\infty)\times\bar{\Omega}\,.
$$
We finally let $s\to 0$ and take $t=1$ in the above inequality to obtain \eqref{b4}.
\end{proof}

A straightforward consequence of Lemma~\ref{leb1} is the uniqueness of the friendly giant.
\begin{corollary}\label{cob2}
There is at most one positive viscosity solution to \eqref{a6}.
\end{corollary}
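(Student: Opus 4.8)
The plan is to obtain the corollary as an immediate consequence of Lemma~\ref{leb1}, applied twice with the two candidate solutions playing opposite roles. Suppose $f_1$ and $f_2$ are both positive viscosity solutions to \eqref{a6}. I would first record that each $f_i$, being a viscosity solution, is continuous on the compact set $\bar{\Omega}$ and therefore bounded, and that $f_i$ is simultaneously a bounded usc viscosity subsolution and a bounded lsc viscosity supersolution to \eqref{b1}, which is precisely the partial differential equation appearing in \eqref{a6}. Moreover, by the very notion of a solution to \eqref{a6}, both functions satisfy $f_i = 0$ on $\partial\Omega$ and $f_i>0$ in $\Omega$, so that conditions \eqref{b2} and \eqref{b3} are fulfilled by each of them.

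Next I would apply Lemma~\ref{leb1} with $w = f_1$ (viewed as a subsolution) and $W = f_2$ (viewed as a supersolution): since the hypotheses \eqref{b2} and \eqref{b3} are verified, the conclusion \eqref{b4} yields $f_1 \le f_2$ in $\bar{\Omega}$. Exchanging the roles of the two functions, that is, taking $w = f_2$ and $W = f_1$ in Lemma~\ref{leb1}, gives $f_2 \le f_1$ in $\bar{\Omega}$. Combining the two inequalities produces $f_1 = f_2$ on $\bar{\Omega}$, which is the asserted uniqueness.

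There is essentially no obstacle in this argument; the only points worth spelling out are that the positivity of $f_i$ in $\Omega$ together with the homogeneous boundary condition are exactly what permit the supersolution $W$ in Lemma~\ref{leb1} to be chosen as $f_2$ (resp. $f_1$), and that the continuity of a viscosity solution up to $\partial\Omega$ supplies the boundedness required by the lemma for free. Hence the corollary follows at once.
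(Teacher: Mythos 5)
Your argument is correct and is exactly the intended use of Lemma~\ref{leb1}: since each positive viscosity solution to \eqref{a6} is both a bounded usc subsolution and a bounded lsc supersolution to \eqref{b1}, vanishes on $\partial\Omega$, and is positive in $\Omega$, two applications of the lemma with the roles of $w$ and $W$ exchanged give the two inequalities $f_1\le f_2$ and $f_2\le f_1$. This matches the paper, which states the corollary as a ``straightforward consequence'' of Lemma~\ref{leb1} without further elaboration.
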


Arguing in a similar way, we have a similar result for the $p$-Laplacian:
\begin{lemma}\label{leb3}
Let $w\in USC(\bar{\Omega})$ and $W\in LSC(\bar{\Omega})$ be respectively a bounded usc viscosity subsolution and a bounded lsc viscosity supersolution to
\begin{equation}\label{b10}
- \Delta_p \zeta - \frac{\zeta}{p-2} = 0 \;\;\mbox{ in }\;\; \Omega\,,
\end{equation}
satisfying \eqref{b2} and \eqref{b3}. Then $w \le W$ in $\bar{\Omega}$.
\end{lemma}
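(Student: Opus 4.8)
The plan is to mirror the proof of Lemma~\ref{leb1} almost verbatim, replacing the Hamiltonian $F$ from \eqref{ham2} by $F_0$ from \eqref{ham1}, i.e.\ simply dropping the gradient term $|\nabla\zeta|^{p-1}$ throughout. First I would introduce, exactly as before, the interior approximations $\Omega_n := \{x\in\Omega : d(x,\partial\Omega)>1/n\}$ for $n\ge N_0$, and set $\mu_n := \min_{\overline{\Omega_n}} W > 0$ (using the lower semicontinuity of $W$ and the positivity \eqref{b3}) and $\eta_n := \max_{\bar\Omega\setminus\Omega_n} w \ge 0$ (using the upper semicontinuity and boundedness of $w$ together with $w=0$ on $\partial\Omega$ by \eqref{b2}). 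The proof that $\eta_n\to 0$ as $n\to\infty$ is identical to that of \eqref{b7}: pick a subsequence with points of maximum $x_n\to y\in\partial\Omega$, use $w(y)=0$ and upper semicontinuity to bound $\limsup_n\eta_n\le 0$, then invoke monotonicity of $(\eta_n)_n$.

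Next, for fixed $s\in(0,\infty)$, $\delta>0$ and $n\ge N_0$ I would define the separate-variables functions
$$
z_n(t,x) := (t+s)^{-1/(p-2)}\, w(x) - s^{-1/(p-2)}\, \eta_n\,, \qquad Z_\delta(t,x) := (t+\delta)^{-1/(p-2)}\, W(x)\,,
$$
on $[0,\infty)\times\bar\Omega$. The key algebraic observation, which is where the specific exponent $1/(p-2)$ enters, is that $(p-1)$-homogeneity of $\xi\mapsto |\xi|^{p-2}\xi$ makes $\Delta_p$ act as a $(p-1)$-homogeneous operator, so that a separate-variables ansatz $(t+\tau)^{-\alpha} \zeta(x)$ turns the elliptic operator $-\Delta_p\zeta - \zeta/(p-2)$ into the parabolic operator $\partial_t\zeta - \Delta_p\zeta$ precisely when $\alpha=1/(p-2)$; here the computation is actually simpler than in Lemma~\ref{leb1} because there is no gradient term whose homogeneity must also be checked. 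Thus $z_n$ is a bounded usc viscosity subsolution and $Z_\delta$ a bounded lsc viscosity supersolution to $\partial_t\zeta - \Delta_p\zeta = 0$ in $(0,\infty)\times\Omega$.

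I would then check the boundary/initial ordering exactly as in Lemma~\ref{leb1}: on the lateral boundary, $Z_\delta = 0 \ge -s^{-1/(p-2)}\eta_n = z_n$; on $\Omega_n$ at $t=0$, if $0<\delta<[\mu_n/(1+\|w\|_\infty)]^{p-2}\, s$ then $Z_\delta(0,x) = \delta^{-1/(p-2)} W(x) \ge \delta^{-1/(p-2)}\mu_n \ge s^{-1/(p-2)}\|w\|_\infty \ge z_n(0,x)$; and on $\bar\Omega\setminus\Omega_n$ at $t=0$, $Z_\delta(0,x)\ge 0 \ge s^{-1/(p-2)}(w(x)-\eta_n) = z_n(0,x)$. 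Applying the parabolic comparison principle \cite[Theorem~8.2]{CIL92} — which applies equally well to the $p$-Laplacian evolution since $F_0$ has the same structural (degenerate ellipticity, continuity) properties used for $F$ — gives $z_n\le Z_\delta$ on $[0,\infty)\times\bar\Omega$. Finally I let $\delta\to 0$, then $n\to\infty$ using $\eta_n\to 0$, then $s\to 0$, and evaluate at $t=1$, obtaining $w\le W$ in $\bar\Omega$. The main (and only mild) obstacle is to record that the comparison principle and the homogeneity computation still go through after deleting the gradient term; since removing a term only simplifies matters, there is in fact no genuine difficulty, and one can simply write ``the proof is the same as that of Lemma~\ref{leb1} upon replacing $F$ by $F_0$ and omitting the term $|\nabla\zeta|^{p-1}$ everywhere.''
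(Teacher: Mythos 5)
Your proposal is correct and is precisely what the paper has in mind: the paper's entire ``proof'' of Lemma~\ref{leb3} is the sentence ``Arguing in a similar way, we have a similar result for the $p$-Laplacian,'' and you have simply written out that argument, correctly noting that dropping the first-order term $|\nabla\zeta|^{p-1}$ affects neither the separate-variables reduction (both $\Delta_p$ and $\xi\mapsto|\xi|^{p-1}$ are $(p-1)$-homogeneous, so removing the latter only simplifies the check) nor the applicability of the comparison principle \cite[Theorem~8.2]{CIL92}.
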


\mysection{Well-posedness: $q\in [p-1,p]$}\label{sec:wp1}

\begin{proposition}\label{pry1}
Assume that $q\in [p-1,p]$ and consider $u_0\in C_0(\bar{\Omega})$ satisfying \eqref{a1}. Then, there is a unique solution $u$ to \eqref{a0a}-\eqref{a0c} in the sense of Definition~\ref{defa0}.
\end{proposition}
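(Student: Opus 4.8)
The plan is the classical combination of a comparison principle (yielding uniqueness) and Perron's method (yielding existence), the restriction $q\le p$ entering precisely to keep the gradient source under control; as noted in \cite{BDL04} for $p=2$, the conclusion can in fact be quoted directly from \cite{dL02}, but let me describe the mechanism. Rewrite \eqref{a0a} as $\partial_t u + F(\nabla u, D^2 u) = 0$ with $F$ given by \eqref{ham2}, and note first that $F$ is continuous on $\RR^N\times\mathcal{S}(N)$ and degenerate elliptic: the only delicate point is the behaviour of $F_0$ near $\xi=0$, but, since $p>2$, the bound $|\xi|^{p-4}\,|\langle X\xi,\xi\rangle|\le |\xi|^{p-2}\,\|X\|$ shows that $F_0(\xi,X)\to 0$ as $\xi\to 0$, so $F_0$, and hence $F$, extends continuously with $F_0(0,X)=0$. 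The structural point is that, since $q\le p$, one has $|\xi|^q\le 1+|\xi|^p$ for all $\xi\in\RR^N$, so the gradient source grows no faster than the natural growth $|\xi|^p$ of the $p$-Laplacian; running the doubling-of-variables argument of \cite[Theorem~8.2]{CIL92} then allows one to absorb the contributions of $|\xi|^q$ into the terms produced by $F_0$, and yields the comparison principle: if $u$ is a bounded usc viscosity subsolution and $v$ a bounded lsc viscosity supersolution of \eqref{a0a} in $Q$ with $u(0,\cdot)\le v(0,\cdot)$ in $\bar\Omega$ and $u\le v$ on $(0,\infty)\times\partial\Omega$, then $u\le v$ in $[0,\infty)\times\bar\Omega$. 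Applied to two solutions $u_1$ and $u_2$ in the sense of Definition~\ref{defa0} — which are continuous, hence simultaneously sub- and supersolutions, and coincide on the parabolic boundary — this gives $u_1\le u_2$ and $u_2\le u_1$, whence uniqueness.

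For existence I would use Perron's method \cite[Section~4]{CIL92}. A global subsolution of the problem is $\underline u\equiv 0$, which solves \eqref{a0a}, vanishes on $(0,\infty)\times\partial\Omega$, and satisfies $\underline u(0,\cdot)=0\le u_0$; a global supersolution is the constant $M:=\|u_0\|_\infty$, which solves \eqref{a0a} and satisfies $M\ge u_0$ in $\bar\Omega$ and $M\ge 0$ on $(0,\infty)\times\partial\Omega$. These bound from below and from above the class of all nonnegative usc viscosity subsolutions of \eqref{a0a} that are $\le M$, so the supremum $u$ of this class is well defined, its usc envelope $u^*$ is a subsolution of \eqref{a0a}, and its lsc envelope $u_*$ is a supersolution of \eqref{a0a}. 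To obtain the boundary and initial values of $u^*$ and $u_*$ one supplements $\underline u$ and $M$ with local barriers near the parabolic boundary: near each point of $(0,\infty)\times\partial\Omega$ a supersolution of \eqref{a0a} which vanishes on $\partial\Omega$ and is positive in $\Omega$ — constructed from the distance function $d(\cdot,\partial\Omega)$, which is $C^2$ in a neighbourhood of $\partial\Omega$ since $\partial\Omega\in C^2$, using a suitable exponent $\alpha\in(0,1)$ and a large constant, the inequality $q\le p$ again being what makes the gradient term harmless — together with, near $t=0$, sub- and supersolutions pinching $u_0$ obtained by regularising $u_0$. These barriers squeeze $u^*$ and $u_*$ so that $u^*=u_*=0$ on $(0,\infty)\times\partial\Omega$ and $u^*(0,\cdot)=u_*(0,\cdot)=u_0$, and the comparison principle above then forces $u^*\le u_*$. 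Hence $u=u^*=u_*\in C([0,\infty)\times\bar\Omega)$ is a solution in the sense of Definition~\ref{defa0}.

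The main obstacle is the comparison principle, specifically checking that the gradient source $|\nabla u|^q$ can be absorbed into the estimates of \cite{CIL92}: this is exactly where $q\le p$ is used, and it is also why the regime $q>p$ is treated separately under the smallness condition \eqref{a1a}, neither comparison nor global existence being available there for large data (cf. Proposition~\ref{prz1}). The secondary technical point, the construction of the parabolic barriers, is routine given $\partial\Omega\in C^2$ and $q\le p$. Since, as observed in \cite{BDL04} for $p=2$, all of the above is already contained in \cite{dL02}, the actual proof can be reduced to a precise reference to that work, together with the remark that its hypotheses are satisfied thanks to $q\le p$.
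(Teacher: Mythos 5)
Your plan is essentially the one the paper follows — reduce the problem to \cite[Corollary~6.2]{dL02}, which packages Perron's method and the barrier machinery, after noting that the comparison principle \cite[Theorem~8.2]{CIL92} applies — so the outline is right. However, there is a genuine misattribution of where the hypothesis $q\le p$ actually enters, and you skip the only step that really needs to be checked.

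First, $q\le p$ is \emph{not} needed for the comparison principle, and your claim that "neither comparison nor global existence" is available for $q>p$ is wrong. The Hamiltonian $F(\xi,X)=F_0(\xi,X)-|\xi|^q$ has no explicit $x$-dependence, so the doubling-of-variables terms coming from $|\xi|^q$ cancel exactly and the structure condition (3.14) of \cite{CIL92} holds for every $q>0$; indeed the paper invokes \cite[Theorem~8.2]{CIL92} for $q>p$ as well (Proposition~\ref{prf2}). What $q\le p$ buys is not comparison but the verification that $\Sigma_-^p=\Sigma_+^p=(0,\infty)\times\partial\Omega$ in the sense of \cite{dL02}, i.e.\ that no loss of Dirichlet boundary conditions can occur so that the solution delivered by Perron attains $0$ on $\partial\Omega$ pointwise. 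This is exactly the computation the paper carries out and that you describe as "routine": one evaluates
$\alpha^p\,F\bigl(\alpha^{-1}(\nabla d(y)+o_\alpha(1)),\,-\alpha^{-2}(\nabla d(y)\otimes\nabla d(y)+o_\alpha(1))\bigr)=(p-1)|\nabla d(y)|^p-\alpha^{p-q}|\nabla d(y)|^q+o_\alpha(1)$,
and the crucial point is that $\alpha^{p-q}$ stays bounded as $\alpha\to 0$ precisely when $q\le p$ (the limit being $p-1>0$ for $q<p$ and $p-2>0$ for $q=p$). For $q>p$ this quantity blows up to $-\infty$ and the condition \eqref{y1} fails, which is why the regime $q>p$ cannot be handled by \cite[Corollary~6.2]{dL02} and requires the separate construction of the supersolution $\mathcal{F}$ in Lemma~\ref{lef1} under the smallness condition \eqref{a1a}. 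So the content of the proof you are deferring to "routine barrier constructions" is the one nontrivial calculation, and your stated reason for invoking $q\le p$ (absorbing $|\xi|^q$ in the comparison estimate) is not the correct one.
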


\begin{proof}
Since the comparison principle holds true for \eqref{a0a}-\eqref{a0c} by \cite[Theorem~8.2]{CIL92}, Proposition~\ref{pry1} follows at once from \cite[Corollary~6.2]{dL02} provided that $\Sigma_-^p=\Sigma_+^p=(0,\infty)\times\partial\Omega$, where the sets $\Sigma_-^p$ and $\Sigma_+^p$ are defined as follows: denoting the distance $d(x,\partial\Omega)$ from $x\in\bar{\Omega}$ to $\partial\Omega$ by $d(x)$, $d$ is a smooth function in a neighbourhood of $\partial\Omega$ in $\bar{\Omega}$ and $(t,x)\in (0,\infty)\times\partial\Omega$ belongs to $\Sigma_-^p$ if
either
\begin{equation}\label{y1}
\liminf_{(y,\alpha)\to(x,0)} \left[ F\left( \frac{\nabla d(y) + o_\alpha(1)}{\alpha} , - \frac{\nabla d(y) \otimes \nabla d(y) + o_\alpha(1)}{\alpha^2} \right) + \frac{o_\alpha(1)}{\alpha} \right] > 0\,,
\end{equation}
or
\begin{equation}\label{y2}
\liminf_{(y,\alpha)\to(x,0)} \left[ F\left( \frac{\nabla d(y) + o_\alpha(1)}{\alpha} , \frac{D^2 d(y) + o_\alpha(1)}{\alpha} \right) + \frac{o_\alpha(1)}{\alpha} \right] > 0\,.
\end{equation}
Similarly,
$(t,x)\in (0,\infty)\times\partial\Omega$ belongs to $\Sigma_+^p$ if
either
\begin{equation}\label{y3}
\limsup_{(y,\alpha)\to(x,0)} \left[ F\left( \frac{-\nabla d(y) + o_\alpha(1)}{\alpha} , \frac{\nabla d(y) \otimes \nabla d(y) + o_\alpha(1)}{\alpha^2} \right) + \frac{o_\alpha(1)}{\alpha} \right] < 0\,,
\end{equation}
or
\begin{equation}\label{y4}
\limsup_{(y,\alpha)\to(x,0)} \left[ F\left( -\frac{\nabla d(y) + o_\alpha(1)}{\alpha} , -\frac{D^2 d(y) + o_\alpha(1)}{\alpha} \right) + \frac{o_\alpha(1)}{\alpha} \right] < 0\,.
\end{equation}

Now, consider $t>0$ and $x\in\partial\Omega$. For $y\in\bar{\Omega}$ sufficiently close to $x$ and $\alpha \in (0,1)$, we have
\begin{eqnarray*}
& & \alpha^p\ \left[ F\left( \frac{\nabla d(y) + o_\alpha(1)}{\alpha} , - \frac{\nabla d(y) \otimes \nabla d(y) + o_\alpha(1)}{\alpha^2} \right) + \frac{o_\alpha(1)}{\alpha} \right] \\
& = & |\nabla d(y) + o_\alpha(1)|^{p-2}\ (|\nabla d(y)|^2 + o_\alpha(1)) + (p-2)\ |\nabla d(y) + o_\alpha(1)|^{p-4}\ (|\nabla d(y)|^4 + o_\alpha(1)) \\
& & \ - \alpha^{p-q}\ |\nabla d(y) + o_\alpha(1)|^q + \alpha^{p-1}\ o_\alpha(1) \\
& = & (p-1)\ |\nabla d(y)|^p - \alpha^{p-q}\ |\nabla d(y)|^q + o_\alpha(1)\,.
\end{eqnarray*}
Since $|\nabla d(x)|=1$ and $p\ge q$, the right-hand side of the above inequality converges as $(y,\alpha)\to (x,0)$ either to $p-1$ if $q<p$ or to $p-2$ if $q=p$, both limits being positive since $p>2$. Therefore, the condition \eqref{y1} is satisfied so that $(t,x)$ belongs to $\Sigma_-^p$. Similarly,  for $y\in\bar{\Omega}$ sufficiently close to $x$ and $\alpha \in (0,1)$, we have
\begin{eqnarray*}
& & \alpha^p\ \left[ F\left( \frac{-\nabla d(y) + o_\alpha(1)}{\alpha} , \frac{\nabla d(y) \otimes \nabla d(y) + o_\alpha(1)}{\alpha^2} \right) + \frac{o_\alpha(1)}{\alpha} \right] \\
& = & - |\nabla d(y) + o_\alpha(1)|^{p-2}\ (|\nabla d(y)|^2 + o_\alpha(1)) - (p-2)\ |\nabla d(y) + o_\alpha(1)|^{p-4}\ (|\nabla d(y)|^4 + o_\alpha(1)) \\
& & \ - \alpha^{p-q}\ |\nabla d(y) + o_\alpha(1)|^q + \alpha^{p-1}\ o_\alpha(1) \\
& = & - (p-1)\ |\nabla d(y)|^p - \alpha^{p-q}\ |\nabla d(y)|^q + o_\alpha(1)\,,
\end{eqnarray*}
from which we readily infer that the condition \eqref{y3} is satisfied. Therefore, $(t,x)$ belongs to $\Sigma_+^p$ and we have thus shown that $\Sigma_-^P=\Sigma_+^p=(0,\infty)\times\partial\Omega$ as expected.
\end{proof}
\mysection{Large time behaviour: $q\in [p-1,p]$}\label{sec:ltb}

As already mentioned in the Introduction, the proofs of Theorems~\ref{tha1} and~\ref{tha2} involve several steps: we first show in the next section (Section~\ref{sec:ub}) that the temporal decay rate of $\|u(t)\|_\infty$ is indeed $t^{-1/(p-2)}$. To this end we construct suitable supersolutions which differ according to whether $q=p-1$ or $q>p-1$. In a second step (Section~\ref{sec:le}), we prove boundary estimates for large times which guarantee that no loss of boundary conditions occurs throughout the time evolution. Here again, we need to perform different proofs for $q\in [p-1,p)$ and $q=p$. The half-relaxed limits technique is then employed in Section~\ref{sec:cv} to show the expected convergence after introducing self-similar variables, and the existence of a positive solution $f$ to \eqref{a6} as well. The final result states that, if $u_0$ is bounded from above by $B\ f^\beta$ for some $B>0$ and $\beta\in (0,1]$, a similar bound holds true for $u(t)$ for positive times but with a possibly lower exponent $\beta$ (Section~\ref{sec:iub}).

\subsection{Upper bounds}\label{sec:ub}

\begin{lemma}\label{lec1} Assume that $q=p-1$. There is $C_1>0$ depending only on $p$, $q$, $\Omega$, and $\|u_0\|_\infty$ such that
\begin{equation}\label{c1}
u(t,x) \le C_1\ (1+t)^{-1/(p-2)}\,, \qquad (t,x)\in (0,\infty)\times\bar{\Omega}\,.
\end{equation}
\end{lemma}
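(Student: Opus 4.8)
The plan is to construct a separate-variables supersolution of the form $U(t,x) := C_1 (1+t)^{-1/(p-2)} \varphi(x)$ for a suitable smooth positive function $\varphi$ on $\bar\Omega$ and a large constant $C_1$, and then invoke the comparison principle \cite[Theorem~8.2]{CIL92} between $u$ and $U$. The natural candidate is to take $\varphi$ bounded below by a positive constant, say $\varphi \equiv M$ a large constant; then $\nabla U = 0$, $\Delta_p U = 0$, and plugging into \eqref{a0a} we must check
$$
\partial_t U - \Delta_p U - |\nabla U|^q = -\frac{C_1 M}{p-2}\ (1+t)^{-1/(p-2)-1} \ge 0\,,
$$
which fails since the left-hand side is negative. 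So a spatially constant function is a subsolution, not a supersolution; this is the sign difficulty one must get around, and it is why a genuinely $x$-dependent $\varphi$ is needed so that the $-\Delta_p U$ term can compensate.

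The fix is to choose $\varphi$ so that $-\Delta_p\varphi$ contributes a positive term dominating the (negative) time-derivative term and the (negative) $-|\nabla U|^q$ term. A standard choice is $\varphi(x) := A - B|x - x_0|^2$ or, more robustly on a general domain, $\varphi(x) := A e^{-\lambda \psi(x)}$ or simply $\varphi(x) := A + |x|^2$ suitably shifted so that $\Delta_p \varphi < -c < 0$ on $\bar\Omega$ while $\varphi$ stays in a fixed range $[m, M]$ with $m>0$. Concretely, set $\varphi(x) := K - |x - x_*|^2$ where $x_*$ and $K$ are chosen so that $\varphi \ge m >0$ on $\bar\Omega$; then $\nabla\varphi = -2(x-x_*)$, $|\nabla\varphi|$ is bounded above and below by positive constants on $\bar\Omega$ (shifting $x_*$ outside $\bar\Omega$ guarantees the lower bound), $D^2\varphi = -2I$, and a direct computation with \eqref{ham1} gives $-\Delta_p\varphi = |\nabla\varphi|^{p-2}\big(2N + 2(p-2)\big) \ge c_0 > 0$. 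Writing $U = C_1(1+t)^{-1/(p-2)}\varphi$, homogeneity of $\Delta_p$ in the spatial variable yields $-\Delta_p U = C_1^{p-1}(1+t)^{-(p-1)/(p-2)}(-\Delta_p\varphi)$, and since $(p-1)/(p-2) = 1/(p-2) + 1$, every term scales like $(1+t)^{-1/(p-2)-1}$; dividing out, the supersolution inequality becomes, pointwise in $x$,
$$
-\frac{C_1}{p-2}\,\varphi(x) + C_1^{p-1}\,(-\Delta_p\varphi(x)) - C_1^{q}\,(1+t)^{-(q-p+1)/(p-2)}\,|\nabla\varphi(x)|^q \ge 0\,.
$$
When $q = p-1$ the exponent $(q-p+1)/(p-2)$ is zero, so the last term is simply $C_1^{p-1}|\nabla\varphi|^{p-1}$, time-independent, and the inequality reads $C_1^{p-1}\big(-\Delta_p\varphi - |\nabla\varphi|^{p-1}\big) \ge \frac{C_1}{p-2}\varphi$. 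Thus I must choose $\varphi$ so that in addition $-\Delta_p\varphi - |\nabla\varphi|^{p-1} \ge c_1 > 0$ on $\bar\Omega$; this is arranged by taking the diameter of $\bar\Omega$ small after translation is not available, so instead one rescales: replace $\varphi$ by $\varepsilon\varphi$ for small $\varepsilon$, which multiplies $-\Delta_p\varphi$ by $\varepsilon^{p-1}$ and $|\nabla\varphi|^{p-1}$ by $\varepsilon^{p-1}$ too — no help — so the correct device is to take $\varphi(x) = K - |x-x_*|^2$ with $|x - x_*|$ large on $\bar\Omega$ but with small oscillation, equivalently $\varphi(x) = K(1 + \delta g(x))$ with $g$ fixed and $\delta$ small; then $-\Delta_p\varphi \sim K^{p-1}\delta^{p-1}(\cdots)$ while $|\nabla\varphi|^{p-1}\sim K^{p-1}\delta^{p-1}(\cdots)$ again scale together, so ultimately one simply picks the geometry once and for all (e.g. $\varphi = R^2 - |x-x_*|^2$ with $B(x_*,R)\supset\bar\Omega$ and $x_* $ far away and $R$ large; then $|\nabla\varphi|\sim 2R$ is large, making $|\nabla\varphi|^{p-1}$ dominate — the wrong direction). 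The clean resolution: absorb the bad gradient term by comparing instead against $U = C_1(1+t)^{-1/(p-2)} f$-type barriers only after the decay rate is known, so at this stage one should instead choose $\varphi \equiv 1 + d(x)$-flattened, or most simply verify that $\varphi(x) := A - |x - x_*|^2$ with $A$ chosen so that on $\bar\Omega$ one has both $-\Delta_p\varphi - |\nabla\varphi|^{p-1} > 0$: this holds precisely when $2N + 2(p-2) > |\nabla\varphi| = 2|x-x_*|$, i.e. when $|x - x_*| < N + p - 2$ for all $x\in\bar\Omega$, which is guaranteed by choosing $x_*$ to be, say, the center of a small ball containing $\bar\Omega$ — WLOG by translating and scaling $\Omega$ we may assume $\bar\Omega \subset B(0, r_0)$ with $r_0 < N+p-2$, or rather we keep $\Omega$ fixed and note $x_*$ can be taken as any point with $\mathrm{dist}(x_*,\bar\Omega)$ in a suitable range while $\mathrm{diam}(\bar\Omega)$ is fixed — if $\mathrm{diam}(\bar\Omega)$ is too large this fails, so one instead uses $\varphi(x) = e^{-\lambda x_1}$ type: then $\nabla\varphi = -\lambda e^{-\lambda x_1}e_1$, $|\nabla\varphi| = \lambda\varphi$, $D^2\varphi = \lambda^2 e^{-\lambda x_1} e_1\otimes e_1$, so $-\Delta_p\varphi = -|\nabla\varphi|^{p-2}(p-1)\lambda^2 e^{-\lambda x_1}$, which is negative — again wrong sign. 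The sign-correct exponential is $\varphi(x) = e^{\lambda x_1}$: $|\nabla\varphi| = \lambda\varphi$, $-\Delta_p\varphi = -(p-1)\lambda^p \varphi^{p-1}$, still negative. Hence $-\Delta_p$ of any convex-in-the-relevant-sense function is the issue, and the right object is a concave $\varphi$ as above, with the domain assumed (after scaling, which is legitimate since the constant $C_1$ is allowed to depend on $\Omega$) small enough that $\mathrm{diam}(\bar\Omega) < N + p - 2$; then $\varphi(x) = (N+p-2)^2 - |x - x_*|^2$ with $x_*$ chosen so that $B(x_*, N+p-2)\supset\bar\Omega$ does the job, giving both $m \le \varphi \le M$ with $m > 0$ and $-\Delta_p\varphi - |\nabla\varphi|^{p-1} \ge c_1 > 0$ on $\bar\Omega$.

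With such a $\varphi$ fixed, choose $C_1$ large enough that $C_1 \ge \|u_0\|_\infty / m$ (so that $U(0,x) = C_1\varphi(x) \ge C_1 m \ge \|u_0\|_\infty \ge u_0(x)$) and $C_1^{p-1} c_1 \ge C_1 \|\varphi\|_\infty/(p-2)$, i.e. $C_1^{p-2} \ge M/((p-2)c_1)$; both are satisfied by taking $C_1$ sufficiently large depending only on $p, q, \Omega$ and $\|u_0\|_\infty$. Then $U$ is a (smooth, hence viscosity) supersolution to \eqref{a0a} in $Q$, $U \ge 0 = u$ on $(0,\infty)\times\partial\Omega$, and $U(0,\cdot)\ge u_0 = u(0,\cdot)$ on $\bar\Omega$; the comparison principle \cite[Theorem~8.2]{CIL92} (applicable since, as shown in Proposition~\ref{pry1}, there is no loss of boundary conditions, and $U$ is a classical smooth supersolution so comparison is standard) yields $u \le U$ in $[0,\infty)\times\bar\Omega$, whence $u(t,x) \le C_1\|\varphi\|_\infty(1+t)^{-1/(p-2)}$, which is \eqref{c1} after renaming the constant. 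The main obstacle, as the discussion above makes clear, is engineering $\varphi$ so that the diffusion term $-\Delta_p\varphi$ genuinely dominates both the time-derivative term and the gradient-source term $|\nabla\varphi|^{p-1}$ simultaneously; this is where the slow-diffusion structure $p>2$ (so that $C_1^{p-1}$ beats $C_1$ for large $C_1$) is essential, and where one must either exploit a smallness/scaling normalization of $\Omega$ or replace the quadratic $\varphi$ by a more carefully tuned profile adapted to $\mathrm{diam}(\Omega)$.
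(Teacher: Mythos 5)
Your overall plan---a smooth separable supersolution $S(t,x)=A(1+t)^{-1/(p-2)}\varphi(x)$ compared to $u$ via \cite[Theorem~8.2]{CIL92}, with $\varphi$ concave so that $-\Delta_p\varphi$ outweighs the time-derivative and the gradient source---is exactly the paper's approach, and your bookkeeping of the common $(1+t)^{-(p-1)/(p-2)}$ factor is correct. The gap is in the choice of profile. Your quadratic $\varphi(x)=K-|x-x_*|^2$ gives $-\Delta_p\varphi-|\nabla\varphi|^{p-1}=|\nabla\varphi|^{p-2}\bigl(2N+2(p-2)-|\nabla\varphi|\bigr)$, which is positive only when $|x-x_*|<N+p-2$. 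For a domain of large diameter no admissible $x_*$ exists, and the scaling you invoke to reduce to this case is not available: writing $w(\tau,y)=\mu\,u(\mu^{p-2}\lambda^p\tau,\lambda y)$ transforms \eqref{a0a} with $q=p-1$ into $\partial_\tau w-\Delta_p w=\lambda\,|\nabla w|^{p-1}$ on $\lambda^{-1}\Omega$, so shrinking the domain (taking $\lambda>1$) simultaneously strengthens the source by the same factor $\lambda$, and the constraint $|\nabla\varphi|<(2N+2(p-2))/\lambda$ on the rescaled domain is exactly as tight as before. Amplitude scaling alone is a null operation, as you already observed. So the restriction $\operatorname{diam}(\bar\Omega)\lesssim N+p-2$ is a genuine obstruction for the quadratic barrier, not a removable normalization.

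What is missing is a profile whose derivative grows at the right rate. Restricting to the radial part, the required inequality $(p-1)|g'|^{p-2}(-g'')>|g'|^{p-1}$, i.e.\ $(\log|g'|)'>\frac{1}{p-1}$, forces $|g'(r)|$ to grow at least exponentially in $r$; the quadratic's $|\varphi'(r)|=2r$ has $(\log|\varphi'|)'=1/r\to 0$ and so fails for $r$ large, whereas for the concave exponential the paper uses,
\[
\sigma(r)=\frac{p-1}{p}\Bigl(e^{pR/(p-1)}-e^{pr/(p-1)}\Bigr),
\]
one has $|\sigma'(r)|=e^{pr/(p-1)}$ and $-\sigma''/|\sigma'|=p/(p-1)>1/(p-1)$, so that
\[
-\Delta_p S-|\nabla S|^{p-1}=A^{p-1}(1+t)^{-(p-1)/(p-2)}\,|\sigma'(r)|^{p-1}\Bigl(p-1+\tfrac{N-1}{r}\Bigr)>0
\]
for every $r>0$, with no restriction on $\operatorname{diam}(\Omega)$; choosing $A$ large then also dominates the $\partial_t S$ term. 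Note that you explored pure exponentials $e^{\pm\lambda x_1}$, found $-\Delta_p$ of those has the wrong sign, and moved on---but a \emph{constant minus} an exponential, in a radial variable $r=|x-x_0|$ with $x_0\notin\bar\Omega$, is concave in the needed sense and is precisely the object that closes the argument. The remaining steps (initial comparison using $A\gtrsim\|u_0\|_\infty/\sigma(R_0)$, boundary comparison, conclusion) you have correctly in place.
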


\begin{proof}
Consider $x_0\not\in\bar{\Omega}$ and $R_0>0$ such that $\Omega\subset B(x_0,R_0)$. For $A>0$, $R>R_0$, $t\ge 0$, and $x\in\RR^N$, we put $r=|x-x_0|$,
$$
S(t,x) := A\ (1+t)^{-1/(p-2)}\ \sigma(r)\,, \qquad \sigma(r) := \frac{p-1}{p}\ \left( e^{pR/(p-1)} - e^{pr/(p-1)} \right)\,,
$$
and assume that
\begin{equation}\label{c2}
A \ge \max{\left\{ \left( \frac{e^{pR/(p-1)}}{(p-1)(p-2)} \right)^{1/(p-2)} , \frac{\|u_0\|_\infty}{\sigma(R_0)} \right\}}\,.
\end{equation}

Since $x_0$ does not belong to $\bar{\Omega}$, the function $S$ is $C^\infty$-smooth in $[0,\infty)\times\bar{\Omega}$ and, it follows from \eqref{c2} that, for $(t,x)\in Q$,
\begin{eqnarray*}
& & (1+t)^{(p-1)/(p-2)}\ \left\{ \partial_t S(t,x) + F(\nabla S(t,x), D^2 S(t,x)) \right\}\\
& = & -\frac{A}{p-2}\  \sigma(r) - A^{p-1}\ |\sigma'(r)|^{p-1} \\
& &\ - (p-1)\ A^{p-1}\ |\sigma'(r)|^{p-2}\ \sigma''(r) - (N-1)\ A^{p-1}\ \frac{|\sigma'(r)|^{p-2} \sigma'(r)}{r} \\
& = & A\ \left[  A^{p-2}\ \left( p-1+\frac{N-1}{r} \right)\ e^{pr} - \frac{\sigma(r)}{(p-2)} \right] \\
& \ge & A\ \left( (p-1)\ A^{p-2} - \frac{e^{pR/(p-1)}}{(p-2)} \right) \ge 0\,.
\end{eqnarray*}
Therefore, the condition \eqref{c2} guarantees that $S$ is a supersolution to \eqref{a0a} in $Q$. In addition, since $|x-x_0|<R_0<R$ for $x\in\Omega$, we have
$$
u(t,x) = 0 \le A\ (t+1)^{-1/(p-2)}\ \sigma(R_0) \le S(t,x)\,, \qquad  (t,x)\in (0,\infty)\times\partial\Omega\,,
$$
and
$$
u_0(x) \le \|u_0\|_\infty \le A\ \sigma(R_0) \le S(0,x)\,, \qquad x\in\bar{\Omega}\,,
$$
by \eqref{c2}. The comparison principle then implies that $u(t,x)\le S(t,x)$ for $(t,x)\in [0,\infty)\times \bar{\Omega}$, and Lemma~\ref{lec1} follows from this inequality.
\end{proof}

\begin{lemma}\label{lec2} Assume that $q>p-1$. There is $C_1>0$ depending only on $p$, $q$, $\Omega$, and $\|u_0\|_\infty$ such that
\begin{equation}\label{c3}
u(t,x) \le C_1\ (1+t)^{-1/(p-2)}\,, \qquad (t,x)\in (0,\infty)\times\bar{\Omega}\,.
\end{equation}
\end{lemma}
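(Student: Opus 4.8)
The plan is to mimic the proof of Lemma~\ref{lec1}, again constructing a radially symmetric separate-variables supersolution on a ball $B(x_0,R)\supset\bar\Omega$ with $x_0\notin\bar\Omega$, but now the gradient source term scales differently: with $q>p-1$ the contribution $-|\nabla S|^q$ to $\partial_t S + F(\nabla S, D^2S)$ carries a factor $(1+t)^{-q/(p-2)}$, while the diffusion terms carry $(1+t)^{-(p-1)/(p-2)}$ and the dissipation term $-S/(p-2)$ carries $(1+t)^{-1/(p-2)}$. Since $q>p-1$, we have $q/(p-2) > (p-1)/(p-2) > 1/(p-2)$, so after multiplying through by $(1+t)^{(p-1)/(p-2)}$ the bad gradient term is $-(1+t)^{(p-1-q)/(p-2)}|\nabla S|^q$, which is bounded (indeed decaying) in $t\ge 0$. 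Thus it can be absorbed by the same profile estimate used in Lemma~\ref{lec1}, at the cost of possibly enlarging the constant $A$.

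Concretely, I would reuse the profile $\sigma(r) = \frac{p-1}{p}\left( e^{pR/(p-1)} - e^{pr/(p-1)} \right)$ and set $S(t,x) := A\,(1+t)^{-1/(p-2)}\,\sigma(|x-x_0|)$. Computing $(1+t)^{(p-1)/(p-2)}\left\{\partial_t S + F(\nabla S, D^2 S)\right\}$ as before, the $p$-Laplacian and drift-like terms again combine to $A\,A^{p-2}\left(p-1+\frac{N-1}{r}\right)e^{pr} - \frac{A\,\sigma(r)}{p-2}$, and the new term is $-A^q\,(1+t)^{(p-1-q)/(p-2)}\,|\sigma'(r)|^q$. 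Using $(1+t)^{(p-1-q)/(p-2)}\le 1$ for $t\ge 0$ since $q>p-1$, and $|\sigma'(r)| = e^{pr/(p-1)} \le e^{pR/(p-1)}$ on $\bar\Omega\subset B(x_0,R)$, this term is bounded below by $-A^q\,e^{pqR/(p-1)}$. So the whole expression is bounded below by $A\left[(p-1)A^{p-2} - \frac{e^{pR/(p-1)}}{p-2} - A^{q-1}\,e^{pqR/(p-1)}\right]$; wait — here one must be careful, since $q-1$ could exceed $p-2$ (indeed $q>p-1$ means $q-1>p-2$), so the term $A^{q-1}$ grows \emph{faster} in $A$ than $A^{p-2}$ and cannot be dominated by enlarging $A$. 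This is the genuine obstacle.

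To resolve it, instead of fixing the profile and choosing $A$ large, I would exploit the freedom in $R$: the gradient term's coefficient is $e^{pqR/(p-1)}$ while the good term's coefficient is $(p-1)A^{p-2}$, so shrinking $R$ (equivalently, choosing $R$ only slightly larger than $R_0$, with $R_0$ itself fixed by $\Omega\subset B(x_0,R_0)$) does not help directly. The cleaner fix is to insert a small parameter: replace $S$ by $S_\lambda(t,x) := A\,(\lambda+t)^{-1/(p-2)}\,\sigma(|x-x_0|)$ — but that reintroduces the same scaling. The robust approach, which I expect the authors take, is to note that for $t\ge 0$ one may split into two regimes. For $t\le T_0$ (a fixed time depending on $\|u_0\|_\infty$, $p$, $q$, $\Omega$), a crude comparison with a $t$-independent supersolution (e.g. $A\,\sigma(r)$ with $A$ large, absorbing $|\nabla|^q$ which is then genuinely bounded) gives $u(t,x)\le C$, hence $u(t,x)\le C\,(1+T_0)^{1/(p-2)}(1+t)^{-1/(p-2)}$ on that slab. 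For $t\ge T_0$, one uses the separate-variables supersolution $S$ above but now with the shifted time origin $(1+t)$ replaced so that the relevant factor $(1+t)^{(p-1-q)/(p-2)}\le (1+T_0)^{(p-1-q)/(p-2)}$ is as small as one wishes by taking $T_0$ large, which tames the $A^{q-1}$ term: the bracket becomes $(p-1)A^{p-2} - \frac{e^{pR/(p-1)}}{p-2} - A^{q-1}\,e^{pqR/(p-1)}\,(1+T_0)^{(p-1-q)/(p-2)} \ge 0$ for $A$ large and then $T_0$ larger still. Matching the two regimes at $t=T_0$ via the comparison principle (using $u(T_0,\cdot)\le C \le S(T_0,\cdot)$ for suitable $A$) yields \eqref{c3}. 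The main obstacle is precisely this scaling mismatch $q-1>p-2$, handled by the time-splitting trick; everything else is a verbatim repetition of the Lemma~\ref{lec1} computation together with the comparison principle \cite[Theorem~8.2]{CIL92}.
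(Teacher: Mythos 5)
Your analysis of why the Lemma~\ref{lec1} computation fails here is correct: after factoring out $A$, the gradient contribution behaves like $A^{q-1}$ while the diffusion contribution behaves like $A^{p-2}$, and since $q-1>p-2$ you cannot take $A$ large. However, the time-splitting fix you propose does not close. The matching condition at $t=T_0$ forces $u(T_0,\cdot)\le S(T_0,\cdot)$, and since $u(T_0,\cdot)\le C$ (a constant from the first regime, independent of $T_0$) and $S(T_0,x)=A(1+T_0)^{-1/(p-2)}\sigma(|x-x_0|)$, you need $A\gtrsim(1+T_0)^{1/(p-2)}$. But then the damped gradient term in your bracket scales as $A^{q-1}(1+T_0)^{(p-1-q)/(p-2)}\sim(1+T_0)^{(q-1)/(p-2)}(1+T_0)^{(p-1-q)/(p-2)}=(1+T_0)$, i.e. exactly the same order as the good term $(p-1)A^{p-2}\sim(1+T_0)$. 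So the factor you hoped to gain from large $T_0$ is cancelled dollar-for-dollar by the forced growth of $A$, and the sign of the bracket still hinges on a $T_0$-independent inequality that does not hold in general with the exponential profile. In short, your order of choices (first $A$ large, then $T_0$ large) is not self-consistent once the matching is accounted for.

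The paper avoids this by changing the profile, not by splitting time. It uses $\varphi(r)=\frac{p-1}{p}\bigl(R^{p/(p-1)}-r^{p/(p-1)}\bigr)$ and $S(t,x)=A(1+\delta t)^{-1/(p-2)}\varphi(|x-x_0|)$, with $A$, $R$, $\delta$ tuned separately. The decisive features of the power profile are that $-\Delta_p\varphi=N$ is a constant and $|\varphi'(r)|^q=r^{q/(p-1)}\le R_0^{q/(p-1)}$ on $\bar\Omega$, both \emph{independent of $R$}. This allows one to take $A$ \emph{small} (specifically $A=(N/(2R_0^{q/(p-1)}))^{1/(q-p+1)}$), which kills the gradient term thanks to $q-p+1>0$, then $R$ large to absorb $\|u_0\|_\infty$, and then $\delta$ small (entering only through the time-derivative term $-A\delta\varphi/(p-2)$) to make the whole expression nonnegative. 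With the exponential profile, the analogous friction term $-A\sigma(r)/(p-2)$ cannot be tuned independently of $R$, which is exactly why one is forced to take $A$ large in Lemma~\ref{lec1} and why that route is blocked here. The extra parameter $\delta$ in the time factor $(1+\delta t)^{-1/(p-2)}$ is harmless for the conclusion since $\delta$ is a fixed positive constant.
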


\begin{proof}
Consider $x_0\not\in\bar{\Omega}$ and $R_0>0$ such that $\Omega\subset B(x_0,R_0)$. For $A>0$, $\delta>0$, $R>R_0$, $t\ge 0$, and $x\in\RR^N$, we put $r=|x-x_0|$,
$$
S(t,x) := A\ (1+\delta t)^{-1/(p-2)}\ \varphi(r)\,, \qquad \varphi(r) := \frac{p-1}{p}\ \left( R^{p/(p-1)} - r^{p/(p-1)} \right)\,,
$$
and assume that
$$
A = \left( \frac{N}{2R_0^{q/(p-1)}} \right)^{1/(q-p+1)}\,, \quad R= \left( R_0^{p/(p-1)} + \frac{p \|u_0\|_\infty}{(p-1) A}\right)^{(p-1)/p}\,, \quad  \delta = \frac{N (p-2) A^{p-2}}{2 R^{p/(p-1)}} \,.
$$

Since $x_0$ does not belong to $\bar{\Omega}$, the function $S$ is $C^\infty$-smooth in $[0,\infty)\times\bar{\Omega}$ and, it follows from the properties $\Omega\subset B(x_0,R_0)$ and $q>p-1$ that, for $(t,x)\in Q$,
\begin{eqnarray*}
& & (1+\delta t)^{(p-1)/(p-2)}\ \left\{ \partial_t S(t,x) + F(\nabla S(t,x), D^2 S(t,x)) \right\}\\
& = & -\frac{A \delta}{p-2}\  \varphi(r) + N\ A^{p-1} - A^q\ (1+\delta t)^{-(q-p+1)/(p-2)}\ r^{q/(p-1)} \\
& \ge  & A^{p-1}\ \left[  N - A^{q-p+1}\ R_0^{q/(p-1)} - \frac{\delta R^{p/(p-1)}}{(p-2) A^{p-2}} \right] \\
& \ge & 0\,.
\end{eqnarray*}
Therefore, the function $S$ is a supersolution to \eqref{a0a} in $Q$ and the choice of $A$ and $R$ also guarantees that
$$
u_0(x) \le \|u_0\|_\infty \le A\ \varphi(R_0) \le S(0,x)\,, \qquad x\in\bar{\Omega}\,.
$$
Finally,
$$
u(t,x) = 0 \le A\ (1+\delta t)^{-1/(p-2)}\ \varphi(R_0) \le S(t,x)\,, \qquad  (t,x)\in (0,\infty)\times\partial\Omega\,,
$$
since $|x-x_0|<R_0<R$ for $x\in\Omega$ and we infer from the comparison principle that $u(t,x)\le S(t,x)$ for $(t,x)\in [0,\infty)\times \bar{\Omega}$. Lemma~\ref{lec2} then follows from this inequality.
\end{proof}

\subsection{Lipschitz estimates}\label{sec:le}

\begin{lemma}\label{lec3}
Assume that $q\in [p-1,p)$. Then there is $L_1>0$ such that
$$
|u(t,x)|=|u(t,x)-u(t,x_0)| \le \frac{L_1}{(1+t)^{1/(p-2)}}\ |x-x_0|\,, \quad (t,x,x_0)\in [1,\infty)\times\bar{\Omega}\times\partial\Omega\,.
$$
\end{lemma}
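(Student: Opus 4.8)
The plan is to obtain the boundary Lipschitz estimate by comparing $u$ near $\partial\Omega$ with an explicit barrier built from the distance function $d(x)=d(x,\partial\Omega)$, which is $C^2$ in a tubular neighbourhood $\Omega_\rho:=\{x\in\Omega:d(x)<\rho\}$ of $\partial\Omega$ for some $\rho>0$. For constants $K>0$ and $\delta>0$ to be chosen, set
$$
B(t,x):=\frac{K}{(1+t)^{1/(p-2)}}\,\psi(d(x))\,,\qquad \psi(\tau):=\tau-\frac{\tau^{1+\sigma}}{(1+\sigma)\rho^\sigma}\quad(\text{or simply }\psi(\tau)=\tau),
$$
on $[0,\infty)\times\overline{\Omega_\rho}$, with a small $\sigma>0$ chosen so that $\psi$ is concave and increasing on $[0,\rho]$, which forces $\psi''<0$ and hence makes the $p$-Laplacian term of the right sign. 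First I would compute, using $\nabla B = K(1+t)^{-1/(p-2)}\psi'(d)\nabla d$ and $D^2B = K(1+t)^{-1/(p-2)}(\psi''(d)\nabla d\otimes\nabla d + \psi'(d)D^2d)$ together with $|\nabla d|=1$, that on $(0,\infty)\times\Omega_\rho$,
$$
(1+t)^{(p-1)/(p-2)}\bigl\{\partial_t B + F(\nabla B,D^2B)\bigr\}
= -\frac{K\psi}{p-2} - K^{p-1}(\psi')^{p-2}\bigl[(p-1)\psi''+\psi'\,\Delta d\bigr] - K^{q}(\psi')^{q}(1+t)^{-(q-p+1)/(p-2)}\,.
$$
Since $q<p$, the last term is dominated by $K^{q}(\psi')^q$ (a bounded quantity), while the dominant negative contribution $-(p-1)K^{p-1}(\psi')^{p-2}\psi''$ is positive and of order $K^{p-1}$; choosing $K$ large enough (depending only on $p,q,\Omega$) makes the bracket nonnegative, so $B$ is a supersolution of \eqref{a0a} in $(0,\infty)\times\Omega_\rho$.

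Next I would check the parabolic boundary of the cylinder $[1,\infty)\times\overline{\Omega_\rho}$. On the outer lateral face $\{d(x)=\rho\}$ we have $u(t,x)\le C_1(1+t)^{-1/(p-2)}$ by Lemma~\ref{lec2} (or Lemma~\ref{lec1} when $q=p-1$), while $B(t,x)=K(1+t)^{-1/(p-2)}\psi(\rho)$, so enlarging $K$ further (so that $K\psi(\rho)\ge C_1$) gives $u\le B$ there. On $\partial\Omega$ itself, $u=0=B$. At the initial time $t=1$, $u(1,x)\le C_1\,2^{-1/(p-2)}$ and $B(1,x)=K\,2^{-1/(p-2)}\psi(d(x))$; since $\psi(d(x))\ge \psi(\rho)\,(d(x)/\rho)$ fails near the boundary, instead I use that it suffices to have $u(1,x)\le B(1,x)$, which after shrinking to a thinner collar or using the a priori bound $u(1,\cdot)\le C_1 2^{-1/(p-2)}$ combined with $\psi(d)\ge c_\rho d$... — actually the clean route is to compare on $[1,\infty)\times\overline{\Omega_\rho}$ where on $t=1$ one only knows $u(1,\cdot)\le C_1$, so I would instead run the comparison on all of $[1,\infty)\times\bar\Omega$ by extending the barrier: take $B(t,x):=K(1+t)^{-1/(p-2)}\min\{\psi(d(x)),\psi(\rho)\}$ capped away from the boundary; since $B$ is a supersolution near $\partial\Omega$ and is a constant-in-$x$ supersolution (dominating $C_1(1+t)^{-1/(p-2)}\ge u$) in the interior — the cap being a minimum of two supersolutions, hence a supersolution — and $B(1,x)\ge K2^{-1/(p-2)}\min\{\psi(d),\psi(\rho)\}\ge u(1,x)$ once $K\psi(\rho)\ge C_1 2^{-1/(p-2)}\cdot 2^{1/(p-2)}=C_1$... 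Then the comparison principle \cite[Theorem~8.2]{CIL92} yields $u(t,x)\le B(t,x)\le K(1+t)^{-1/(p-2)}\psi(d(x))\le K(1+t)^{-1/(p-2)}d(x)$ on $[1,\infty)\times\bar\Omega$. Since $d(x)=d(x,\partial\Omega)\le|x-x_0|$ for every $x_0\in\partial\Omega$, and since $u(t,x_0)=0$, this is exactly the claimed estimate with $L_1=K$.

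The main obstacle is the interplay at the initial slice $t=1$ and the matching of the barrier across $\{d(x)=\rho\}$: one must check that the $x$-independent interior supersolution $C_1(1+t)^{-1/(p-2)}$ (which dominates $u$ by Lemmas~\ref{lec1}--\ref{lec2}) and the distance-based barrier can be glued into a single global viscosity supersolution, for which the standard fact that the pointwise minimum of two supersolutions is a supersolution is the key tool, together with the choice $K\psi(\rho)\ge C_1$ so that the distance barrier exceeds the constant one precisely on $\{d(x)\le\rho\}$. A secondary technical point is ensuring $\psi''<0$ strictly so that the $-(p-1)K^{p-1}(\psi')^{p-2}\psi''$ term genuinely controls the curvature term $\psi'|\Delta d|$ and the source term $K^q(\psi')^q$ uniformly in $t\ge 0$ — this is where the restriction $q<p$ is used, since for $q=p$ the source term would be of the same order $K^p$ as (a piece of) the main term and a separate argument (promised in the text for Theorem~\ref{tha2}) is needed.
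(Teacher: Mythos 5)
Your barrier-based strategy is the right idea and is essentially what the paper does, but two steps in your sketch do not work as stated.

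\textbf{The wrong free parameter.} Your claim that ``choosing $K$ large enough \dots makes the bracket nonnegative'' fails as soon as $q\in(p-1,p)$: in your displayed expression the favourable contribution $-(p-1)K^{p-1}(\psi')^{p-2}\psi''$ is of order $K^{p-1}$, while the source contribution $K^{q}(\psi')^{q}$ is of order $K^{q}$ with $q>p-1$, so increasing $K$ makes the source grow \emph{faster} than the term supposed to beat it. (The variant $\psi(\tau)=\tau$ is also ruled out outright, since then $\psi''\equiv 0$ and there is no favourable term at all.) What actually saves the argument is a \emph{thin} barrier, not a \emph{tall} one. In the paper the barrier is $A\,(1+t)^{-1/(p-2)}(1-e^{-r/\delta})$ on a collar of thickness $\delta$ adjoining an exterior tangent ball, so $-\psi''(\psi')^{p-2}\sim\delta^{-p}$; the amplitude $A$ is fixed first, independently of $\delta$, by the matching requirement $A(1-e^{-1})\ge C_1$, and the single obstructive term is $A^{q-p+1}\delta^{p-q}$, which is killed by sending $\delta\to 0$ using $p-q>0$. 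In your notation the correct move is to fix $K$ by $K\psi(\rho)\ge C_1$, keep $\psi'$ bounded below on $[0,\rho]$ (e.g.\ $\psi(\tau)=\tau-a\tau^{2}$ with $a\rho\le 1/4$), and then shrink $\rho$ so that $-\psi''\sim 1/\rho$ dominates $K^{q-p+1}\sim\rho^{-(q-p+1)}$, which uses precisely $q-p+1<1$; your writeup leaves $K$, $\sigma$ and $\rho$ entangled in a way that hides this. Your remark that $q=p$ fails is qualitatively right, but the actual obstruction with the correct scaling is that $A^{q-p+1}\delta^{p-q}$ degenerates to a $\delta$-independent constant at $q=p$, which is why the paper changes variables in Lemma~\ref{lec4}.

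\textbf{The initial slice.} Running the comparison from $t=1$ onward is circular: you would need $u(1,x)\le B(1,x)=K\,2^{-1/(p-2)}\psi(d(x))\to 0$ as $d(x)\to 0$, which is essentially the very boundary estimate you are trying to prove, whereas the only available a priori bound is the interior one $u(1,\cdot)\le C_1\,2^{-1/(p-2)}$. The cap-by-minimum device does not help here, since near $\partial\Omega$ the capped barrier is still $K\,2^{-1/(p-2)}\psi(d(x))$. The paper sidesteps this by fixing $t_0\ge 1$, running the comparison on $[0,t_0]$ where $u(0)=u_0$ \emph{is} controlled by $\|u_0\|_\infty$, and augmenting the barrier with the purely time-dependent term $M(1+t)^{-1/(p-2)}-M(1+t_0)^{-1/(p-2)}$, which equals $\ge\|u_0\|_\infty$ at $t=0$, is nonnegative for $t\le t_0$, and vanishes at $t=t_0$; the Lipschitz bound then drops out by evaluating the comparison at $t=t_0$.
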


\begin{proof}
Since the boundary $\partial\Omega$ of $\Omega$ is smooth, it satisfies the uniform exterior sphere condition by \cite[Section~14.6]{GT01}, that is, there is $R_\Omega>0$ such that, for each $x_0\in\partial\Omega$, there is $y_0\in\RR^N$ satisfying $|x_0-y_0|=R_\Omega$ and $B(y_0,R_\Omega)\cap\Omega=\emptyset$.

We fix positive real numbers $A$, $M$, and $\delta$ such that
\begin{equation}\label{c4}
A := \max{\left\{ M , \frac{e C_1}{e-1} , \left( \frac{4 e^{p-1}}{p-2} \right)^{1/(p-2)} \right\}}\,, \quad M:= \frac{2^{1/(p-2)} \|u_0\|_\infty}{2^{1/(p-2)} - 1}\,,
\end{equation}
and
\begin{equation}\label{c5}
0< \delta < \min{\left\{ 1 , \frac{(p-2) R_\Omega}{N-1} , \left( \frac{1}{2 A^{q-p+1}} \right)^{1/(p-q)} \right\}}\,, \quad \Omega_\delta:= \{ x\in\Omega\ :\ d(x,\partial\Omega)>\delta \} \ne \emptyset\,,
\end{equation}
the constant $C_1$ being defined in Lemma~\ref{lec1} if $q=p-1$ and Lemma~\ref{lec2} if $q\in (p-1,p)$.

We next consider $t_0\ge 1$, $x_0\in\partial\Omega$, and let $y_0\in\RR^N$ be such that $|x_0-y_0|=R_\Omega$ and $B(y_0,R_\Omega)\cap\Omega=\emptyset$. We define the open subset $U_{\delta,x_0}$ of $\RR^N$ by
$$
U_{\delta,x_0} := \{ x\in \Omega\ : R_\Omega < |x-y_0| < R_\Omega+\delta \}\,,
$$
and the function
$$
S_{\delta,x_0}(t,x) := \frac{A}{(1+t)^{1/(p-2)}}\ \left( 1 - e^{-(|x-y_0|-R_\Omega)/\delta} \right) + \frac{M}{(1+t)^{1/(p-2)}} - \frac{M}{(1+t_0)^{1/(p-2)}}
$$
for $(t,x)\in [0,t_0]\times \overline{U_{\delta,x_0}}$. Since $y_0\not\in \overline{U_{\delta,x_0}}$, the function $S_{\delta,x_0}$ is $C^\infty$-smooth in $[0,t_0]\times \overline{U_{\delta,x_0}}$. For $(t,x)\in (0,t_0)\times U_{\delta,x_0}$, we set $r:=|x-y_0|-R_\Omega\in (0,\delta)$ and compute
\begin{eqnarray*}
& & \frac{(1+t)^{(p-1)/(p-2)}}{A^{p-1}}\ \left( \partial_t S_{\delta,x_0} - \Delta_p S_{\delta,x_0} - |\nabla S_{\delta,x_0}|^{q} \right)(t,x) \\
& = & - \frac{1- e^{-r/\delta}}{(p-2) A^{p-2}} - \frac{(N-1) e^{-(p-1)r/\delta}}{(r+R_\Omega) \delta^{p-1}} + \frac{(p-1) e^{-(p-1)r/\delta}}{\delta^p} \\
& &\ - \frac{e^{-qr/\delta}}{\delta^{q}}\ \frac{A^{q-p+1}}{(1+t)^{(q-p+1)/(p-2)}} - \frac{M}{(p-2) A^{p-1}}\\
& \ge &  \frac{e^{-(p-1)r/\delta}}{\delta^p}\ \left[ p-1- \frac{N-1}{r+R_\Omega}\ \delta - \frac{A^{q-p+1}\ \delta^{p-q}}{e^{(q-p+1)r/\delta}} - \frac{\delta^p e^{(p-1)r/\delta}}{(p-2) A^{p-2}} - \frac{M \delta^p e^{(p-1)r/\delta}}{(p-2) A^{p-1}} \right] \\
& \ge &  \frac{e^{-(p-1)r/\delta}}{\delta^p}\ \left[ p-1- \frac{N-1}{R_\Omega}\ \delta - A^{q-p+1}\ \delta^{p-q} - \frac{e^{p-1}}{(p-2) A^{p-2}} - \frac{M e^{p-1}}{(p-2) A^{p-1}} \right] \\
& \ge &  \frac{e^{-(p-1)r/\delta}}{\delta^p}\ \left[ 1 - A^{q-p+1}\ \delta^{p-q} - \frac{2e^{p-1}}{(p-2) A^{p-2}} \right] \ge 0\,,
\end{eqnarray*}
the last two inequalities being a consequence of the choice \eqref{c4} and \eqref{c5} of $\delta$, $A$, and $M$. Therefore, $S_{\delta,x_0}$ is a supersolution to \eqref{a0a} in $(0,\infty)\times U_{\delta,x_0}$. Moreover, since $t_0\ge 1$, we have
$$
S_{\delta,x_0}(0,x) \ge M -  \frac{M}{2^{1/(p-2)}} = \|u_0\|_\infty \ge u_0(x)\,, \quad x\in \overline{U_{\delta,x_0}}\,,
$$
by \eqref{c4}. It also follows from \eqref{c1} and \eqref{c3} that $u(t,x)\le C_1\ (1+t)^{-1/(p-2)}$ for $t\ge 0$ and $x\in\bar{\Omega}$. Then, if $(t,x)\in (0,t_0)\times \partial U_{\delta,x_0}$, either $x\in\partial\Omega$ and $u(t,x)=0\le S_{\delta,x_0}(t,x)$. Or $r=|x-y_0|-R_\Omega=\delta$ and it follows from \eqref{c4} that
$$
S_{\delta,x_0}(t,x) \ge \frac{A (1-e^{-1})}{(1+t)^{1/(p-2)}} \ge \frac{C_1}{(1+t)^{1/(p-2)}} \ge u(t,x)\,.
$$
We then deduce from the comparison principle \cite[Theorem~8.2]{CIL92} that $u(t,x)\le S_{\delta,x_0}(t,x)$ for $t\in [0,t_0]$ and $x\in\overline{U_{\delta,x_0}}$. In particular, for $t=t_0$,
$$
u(t_0,x) \le \frac{A}{(1+t_0)^{1/(p-2)}}\ \left( 1 - e^{-(|x-y_0|-R_\Omega)/\delta} \right)\,, \quad x\in \overline{U_{\delta,x_0}}\,.
$$
Consequently,
$$
0 \le u(t_0,x) - u(t_0,x_0) = u(t_0,x) \le \frac{A}{(1+t_0)^{1/(p-2)}}\ \left( 1 - e^{-(|x-y_0|-R_\Omega)/\delta} \right)\,, \quad x\in \overline{U_{\delta,x_0}}\,,
$$
whence, since $|x_0-y_0|-R_\Omega=0$,
\begin{equation}\label{c6}
0\le u(t_0,x) - u(t_0,x_0) \le \frac{A}{\delta (1+t_0)^{1/(p-2)}}\ |x-x_0|\,, \quad x\in\overline{U_{\delta,x_0}}\,.
\end{equation}

Consider finally $x\in\Omega$ and $x_0\in\partial\Omega$. If $|x-x_0| \ge\delta/2$, it follows from \eqref{c1} that
$$
|u(t_0,x)-u(t_0,x_0)| = u(t_0,x) \le \frac{2 C_1}{\delta (1+t_0)^{1/(p-2)}}\ |x-x_0|\,.
$$
If $|x-x_0|<\delta/2$, let $y_0\in\RR^N$ be such that $|x_0-y_0|=R_\Omega$ and $B(y_0,R_\Omega)\cap\Omega=\emptyset$. Since $x\in\Omega$, $|x-y_0|>R_\Omega$ and
$$
|x-y_0| \le |x-x_0|+|x_0-y_0| < R_\Omega+\delta\,.
$$
Consequently, $x\in U_{\delta,x_0}$ and we infer from \eqref{c6} that
$$
|u(t_0,x) - u(t_0,x_0)| \le \frac{A}{\delta (1+t_0)^{1/(p-2)}}\ |x-x_0|\,.
$$
 We have thus established Lemma~\ref{lec3} with $L_1:= \max{\{ 2 C_1 , A \}}/\delta$ for $(t,x,x_0)\in[1,\infty)\times \Omega \times \partial\Omega$. The extension to $[1,\infty)\times\bar{\Omega}\times\partial\Omega$ then readily follows thanks to the continuity of $u$ up to the boundary of $\Omega$.
\end{proof}

The previous proof does not apply to the case $q=p$ as the term $A^{q-p+1}\ \delta^{p-q}$ cannot be made arbitrarily small by a suitable choice of $\delta$.  Still, a similar result is valid for $q=p$ but first requires a change of variable.

\begin{lemma}\label{lec4}
Assume that $q=p$. Then there is $L_1>0$ such that
$$
|u(t,x)|=|u(t,x)-u(t,x_0)| \le \frac{L_1}{(1+t)^{1/(p-2)}}\ |x-x_0|\,, \quad (t,x,x_0)\in [1,\infty) \times \bar{\Omega} \times \partial\Omega\,.
$$
\end{lemma}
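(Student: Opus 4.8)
The plan is to remove the gradient source term $|\nabla u|^p$ by a Hopf--Cole type change of the unknown, to use it to manufacture a barrier for~\eqref{a0a}, and then to run the argument of Lemma~\ref{lec3}; the key point is that the comparison will still be carried out on~\eqref{a0a} itself, the change of variable serving only to produce a convenient supersolution.

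Since $q=p$, I would introduce $g(s):=(p-1)\big(e^{s/(p-1)}-1\big)$, which is smooth and increasing, with $g(0)=0$, $g^{-1}(s)=(p-1)\ln\big(1+s/(p-1)\big)$ and $0<(g^{-1})'(s)=(1+s/(p-1))^{-1}\le 1$ for $s\ge 0$. A direct computation shows that, for any smooth $\phi$ with $\nabla\phi\neq 0$,
\[
\partial_t\big(g^{-1}(\phi)\big)-\Delta_p\big(g^{-1}(\phi)\big)-\big|\nabla\big(g^{-1}(\phi)\big)\big|^p
=\Big(1+\tfrac{\phi}{p-1}\Big)^{-(p-1)}\left\{\Big(1+\tfrac{\phi}{p-1}\Big)^{p-2}\partial_t\phi-\Delta_p\phi\right\},
\]
the two contributions involving $|\nabla\phi|^p$ cancelling exactly. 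Hence $g^{-1}(\phi)$ is a classical supersolution of~\eqref{a0a} as soon as $\phi\ge 0$ is a supersolution of the gradientless equation $\big(1+\phi/(p-1)\big)^{p-2}\partial_t\phi-\Delta_p\phi=0$. It then suffices to rerun the barrier construction of Lemma~\ref{lec3} for this last equation. I would set $v:=g(u)\in C([0,\infty)\times\bar\Omega)$, which vanishes on $(0,\infty)\times\partial\Omega$ and, by Lemma~\ref{lec2} applied with $q=p>p-1$, satisfies $\|u\|_{L^\infty(Q)}\le C_1$ and $0\le v(t,x)\le C_2\,(1+t)^{-1/(p-2)}$ with $C_2:=C_1 e^{C_1/(p-1)}$.

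For $t_0\ge 1$ and $x_0\in\partial\Omega$, with $y_0$ provided by the uniform exterior sphere condition and $U_{\delta,x_0}:=\{x\in\Omega:R_\Omega<|x-y_0|<R_\Omega+\delta\}$, I would take the same test function
\[
\tilde S_{\delta,x_0}(t,x):=\frac{A}{(1+t)^{1/(p-2)}}\Big(1-e^{-(|x-y_0|-R_\Omega)/\delta}\Big)+\frac{M}{(1+t)^{1/(p-2)}}-\frac{M}{(1+t_0)^{1/(p-2)}}
\]
as in Lemma~\ref{lec3}, now with $M$ as there but built from $\|g(u_0)\|_\infty$, with $A:=\max\{M,\,eC_2/(e-1),\,1\}$ (large enough only for $\tilde S_{\delta,x_0}$ to dominate $v$ on the parabolic boundary), and with $\delta\in(0,1)$ chosen afterwards, small enough that $\Omega_\delta\neq\emptyset$. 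Since the equation has no gradient term, the computation of $\Lambda\,\partial_t\tilde S_{\delta,x_0}-\Delta_p\tilde S_{\delta,x_0}$, where $\Lambda:=(1+(A+M)/(p-1))^{p-2}$, is exactly the one in the proof of Lemma~\ref{lec3} with the $|\nabla\,\cdot\,|^q$ contribution deleted and the time derivative weighted by $\Lambda$; in particular the term $A^{q-p+1}\delta^{p-q}$ that obstructed the case $q=p$ there does not appear, and since $\Lambda$ is bounded by a constant multiple of $A^{p-2}$, all remaining error terms carry a factor $\delta^p$ and are made $\le(p-1)/4$ by shrinking $\delta$. This yields $\partial_t\tilde S_{\delta,x_0}<0$ and $\Lambda\,\partial_t\tilde S_{\delta,x_0}-\Delta_p\tilde S_{\delta,x_0}\ge 0$ in $(0,t_0)\times U_{\delta,x_0}$; since $0\le\tilde S_{\delta,x_0}\le A+M$ forces $\big(1+\tilde S_{\delta,x_0}/(p-1)\big)^{p-2}\le\Lambda$, it follows (using $\partial_t\tilde S_{\delta,x_0}<0$) that $\tilde S_{\delta,x_0}$ is a supersolution of the gradientless equation.

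By the identity above, $S:=g^{-1}(\tilde S_{\delta,x_0})$ is then a smooth supersolution of~\eqref{a0a} in $(0,t_0)\times U_{\delta,x_0}$, and $g$ being increasing, $S\ge u$ on the parabolic boundary there exactly as in Lemma~\ref{lec3} (using $g(u_0)\le\|g(u_0)\|_\infty$, $v=0$ on $\partial\Omega$, and $v(t,\cdot)\le C_2(1+t)^{-1/(p-2)}\le A(1-e^{-1})(1+t)^{-1/(p-2)}$ on $\{|x-y_0|=R_\Omega+\delta\}$). The comparison principle \cite[Theorem~8.2]{CIL92} — valid for~\eqref{a0a} as recalled in the proof of Proposition~\ref{pry1} — then gives $u\le S$ on $[0,t_0]\times\overline{U_{\delta,x_0}}$; evaluating at $t=t_0$, using $1-e^{-\sigma}\le\sigma$, $0\le g^{-1}(s)\le s$ and $|x-y_0|-R_\Omega\le|x-x_0|$, and recalling $u(t_0,x_0)=0$, one finds
\[
0\le u(t_0,x)-u(t_0,x_0)\le\frac{A}{\delta\,(1+t_0)^{1/(p-2)}}\,|x-x_0|\,,\qquad x\in\overline{U_{\delta,x_0}}\,,
\]
and the extension from $\overline{U_{\delta,x_0}}$ to $\bar\Omega$ (distinguishing $|x-x_0|\ge\delta/2$, where one invokes $u(t_0,x)\le C_1(1+t_0)^{-1/(p-2)}$ from Lemma~\ref{lec2}, from $|x-x_0|<\delta/2$, where $x\in U_{\delta,x_0}$) is word for word that at the end of the proof of Lemma~\ref{lec3}, giving the claim with $L_1:=\max\{2C_1,A\}/\delta$. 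The only genuinely new step is the supersolution inequality for $\tilde S_{\delta,x_0}$ in the transformed equation: this is where the gradient term disappears, and it is essential there that $v=g(u)$ be bounded — which is precisely what Lemma~\ref{lec2} provides — so that the solution-dependent coefficient $(1+v/(p-1))^{p-2}$ can be controlled by the constant $\Lambda$; no comparison principle beyond the one already used for~\eqref{a0a} is needed, since the barrier $S$ is compared directly with $u$, and everything else is a transcription of the proof of Lemma~\ref{lec3}.
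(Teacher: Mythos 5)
Your proof is correct and rests on exactly the same idea as the paper's: the Hopf--Cole-type change of variable $g(s)=(p-1)\big(e^{s/(p-1)}-1\big)$ that kills the $|\nabla\,\cdot\,|^p$ term when $q=p$, combined with the exponential barrier of Lemma~\ref{lec3} and the exterior-sphere construction. The identity you compute is precisely the one the paper uses implicitly via \eqref{c8a} (your $v=g(u)$ equals $(p-1)h$ in the paper's notation), and the estimates on $\Lambda$, $A$, $M$, $\delta$, $C_2$ match the paper's \eqref{c9}--\eqref{c11} up to normalization.

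The one genuine difference is the \emph{direction} in which you apply the transform, and it is worth noting. The paper transforms the \emph{solution}: it sets $h=e^{u/(p-1)}-1$, shows $h$ solves the gradientless quasilinear equation $\partial_t\big[\big((1+h)/(p-1)\big)^{p-1}\big]-\Delta_p h=0$, builds a supersolution $S_{\delta,x_0}$ of \emph{that} equation, and compares $h\le S_{\delta,x_0}$ by invoking \cite[Theorem~8.2]{CIL92} for the transformed equation. You instead transform the \emph{barrier}: you build $\tilde S_{\delta,x_0}$ as a supersolution of $(1+\phi/(p-1))^{p-2}\partial_t\phi-\Delta_p\phi=0$, push it back through $g^{-1}$ to a classical supersolution of the original equation \eqref{a0a}, and compare $u\le g^{-1}(\tilde S_{\delta,x_0})$ using the comparison principle already secured for \eqref{a0a} in Proposition~\ref{pry1}. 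This buys you a small but real simplification: you never have to verify that the comparison principle of \cite{CIL92} applies to the transformed quasilinear equation \eqref{c8a} (whose time derivative carries a solution-dependent factor), since the only comparison you invoke is the one already available. The trade-off is that you must justify the supersolution property of $\tilde S_{\delta,x_0}$ with the varying coefficient $(1+\tilde S_{\delta,x_0}/(p-1))^{p-2}$ instead of a constant one; your observation that $\partial_t\tilde S_{\delta,x_0}<0$ and $0\le\tilde S_{\delta,x_0}\le A+M$ together let you replace that coefficient by the fixed constant $\Lambda$ handles this cleanly and is essentially what the paper's computation does when it bounds $(1+S_{\delta,x_0})^{p-2}$ by $(1+2A)^{p-2}$.
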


\begin{proof}
We define $h:=e^{u/(p-1)}-1$ and notice that
\begin{equation}\label{c7}
\frac{u}{p-1} \le h \le \frac{e^{u/(p-1)}}{p-1}\ u\,.
\end{equation}
By \eqref{a0a}-\eqref{a0c} and \cite[Corollaire~2.1]{Bl94} (or \cite[Proposition~2.5]{BdCD97}), $h$ is a viscosity solution to
\begin{eqnarray}
\partial_t \left[ \left( \frac{1+h}{p-1} \right)^{p-1} \right] - \Delta_p h & = & 0 \;\;\mbox{ in }\;\; Q\,, \label{c8a}\\
h & = & 0 \;\;\mbox{ on }\;\; (0,\infty)\times\partial\Omega\,, \label{c8b}\\
h(0) & = & e^{u_0/(p-1)}-1 \;\;\mbox{ in }\;\; \Omega\,. \label{c8c}
\end{eqnarray}

We fix positive real numbers $A$, $M$, and $\delta$ such that
\begin{equation}\label{c9}
A := \max{\left\{ 1 , M , \frac{e C_1}{(p-1) (e-1)}\ e^{C_1/(p-1)} \right\}}\,, \quad M:= \frac{2^{1/(p-2)} e^{\|u_0\|_\infty/(p-1)}}{2^{1/(p-2)} - 1}\,,
\end{equation}
and
\begin{equation}\label{c10}
0< \delta < \min{\left\{ 1 , \frac{(p-2) R_\Omega}{N-1} , \left( \frac{p-2}{2 e^{p-1}} \right)^{1/p}\ \left( \frac{3}{p-1} \right)^{-(p-2)/p} \right\}}\,, \quad \Omega_\delta \ne \emptyset\,,
\end{equation}
the constant $C_1$ and the set $\Omega_\delta$ being defined in Lemma~\ref{lec2} and \eqref{c5}, respectively.

We next consider $t_0\ge 1$, $x_0\in\partial\Omega$, and let $y_0\in\RR^N$ be such that $|x_0-y_0|=R_\Omega$ and $B(y_0,R_\Omega)\cap\Omega=\emptyset$, the definition of $R_\Omega$ and the existence of $y_0$ being stated at the beginning of the proof of Lemma~\ref{lec3}. We still define the open subset $U_{\delta,x_0}$ of $\RR^N$ by
$$
U_{\delta,x_0} := \{ x\in \Omega\ : R_\Omega < |x-y_0| < R_\Omega+\delta \}\,,
$$
and the function
$$
S_{\delta,x_0}(t,x) := \frac{A}{(1+t)^{1/(p-2)}}\ \left( 1 - e^{-(|x-y_0|-R_\Omega)/\delta} \right) + \frac{M}{(1+t)^{1/(p-2)}} - \frac{M}{(1+t_0)^{1/(p-2)}}
$$
for $(t,x)\in [0,t_0]\times \overline{U_{\delta,x_0}}$. Since $y_0\not\in \overline{U_{\delta,x_0}}$, the function $S_{\delta,x_0}$ is $C^\infty$-smooth in $[0,t_0]\times \overline{U_{\delta,x_0}}$. For $(t,x)\in (0,t_0)\times U_{\delta,x_0}$, we set $r:=|x-y_0|-R_\Omega\in (0,\delta)$ and compute
\begin{eqnarray*}
& & \frac{(1+t)^{(p-1)/(p-2)}}{A^{p-1}}\ \left( \partial_t \left[ \left( \frac{1+S_{\delta,x_0}}{p-1} \right)^{p-1} \right] - \Delta_p S_{\delta,x_0} \right)(t,x) \\
& = & - \frac{(1-e^{-r/\delta})}{(p-2) (p-1)^{p-2}}\ \frac{(1+S_{\delta,x_0})^{p-2}}{A^{p-2}} - \frac{M}{(p-2) (p-1)^{p-2}} \frac{(1+S_{\delta,x_0})^{p-2}}{A^{p-1}}\\
& & + \frac{(p-1) e^{-(p-1)r/\delta}}{\delta^p} - \frac{(N-1) e^{-(p-1)r/\delta}}{(r+R_\Omega) \delta^{p-1}} \\
& \ge &  \frac{e^{-(p-1)r/\delta}}{\delta^p}\ \left[ p-1- \frac{N-1}{R_\Omega}\ \delta - \frac{\delta^p\ e^{(p-1)r/\delta}}{(p-2) (p-1)^{p-2}}\ \left( \frac{1+2A}{A} \right)^{p-2} - \frac{M \delta^p e^{(p-1)r/\delta} (1+2A)^{p-2}}{(p-2) (p-1)^{p-2} A^{p-1}} \right] \\
& \ge &  \frac{e^{-(p-1)r/\delta}}{\delta^p}\ \left[ 1 - \frac{2 \delta^p\ e^{p-1}}{(p-2)} \ \left( \frac{3}{p-1} \right)^{p-2} \right] \ge 0\,,
\end{eqnarray*}
the last two inequalities being a consequence of the choice \eqref{c9} and \eqref{c10} of $\delta$, $A$, and $M$. Therefore, $S_{\delta,x_0}$ is a supersolution to \eqref{c8a} in $(0,\infty)\times U_{\delta,x_0}$. Moreover, since $t_0\ge 1$, we have
$$
S_{\delta,x_0}(0,x) \ge M -  \frac{M}{2^{1/(p-2)}} = e^{\|u_0\|_\infty/(p-1)} \ge h(0,x)\,, \quad x\in \overline{U_{\delta,x_0}}\,,
$$
by \eqref{c9}. It next follows from \eqref{c3} and \eqref{c7} that
\begin{equation}\label{c11}
h(t,x) \le \frac{e^{u(t,x)/(p-1)}}{p-1}\ u(t,x)\le \frac{C_1 e^{C_1/(p-1)}}{p-1}\ (1+t)^{-1/(p-2)}\,, \quad (t,x)\in [0,\infty)\times\bar{\Omega}\,.
\end{equation}
Then, if $(t,x)\in (0,t_0)\times \partial U_{\delta,x_0}$, either $x\in\partial\Omega$ and $h(t,x)=0\le S_{\delta,x_0}(t,x)$. Or $r=|x-y_0|-R_\Omega=\delta$ and it follows from \eqref{c9} and \eqref{c11} that
$$
S_{\delta,x_0}(t,x) \ge \frac{A (1-e^{-1})}{(1+t)^{1/(p-2)}} \ge \frac{C_1 e^{C_1/(p-1)}}{(p-1 )(1+t)^{1/(p-2)}} \ge h(t,x)\,.
$$
We then deduce from the comparison principle \cite[Theorem~8.2]{CIL92} that $h(t,x)\le S_{\delta,x_0}(t,x)$ for $t\in [0,t_0]$ and $x\in\overline{U_{\delta,x_0}}$. In particular, owing to \eqref{c7}, for $t=t_0$,
$$
\frac{u(t_0,x)}{p-1} \le h(t_0,x) \le \frac{A}{(1+t_0)^{1/(p-2)}}\ \left( 1 - e^{-(|x-y_0|-R_\Omega)/\delta} \right)\,, \quad x\in \overline{U_{\delta,x_0}}\,,
$$
and we argue as in the proof of Lemma~\ref{lec3} to complete the proof.
\end{proof}

We next proceed as in \cite[Theorem~5]{KK00} to deduce the Lipschitz continuity of $u(t)$ from Lemma~\ref{lec3} and Lemma~\ref{lec4}.

\begin{corollary}\label{coc5}
Assume that $q\in [p-1,p]$. Then there is $L_2>0$ such that
$$
|u(t,x)-u(t,y)| \le \frac{L_2}{(1+t)^{1/(p-2)}}\ |x-y|\,, \quad (t,x,y)\in [1,\infty) \times \bar{\Omega} \times \bar{\Omega}\,.
$$
\end{corollary}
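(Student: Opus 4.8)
The plan is to reduce the global Lipschitz bound to a purely interior one, combining the boundary estimates of Lemmas~\ref{lec3} and~\ref{lec4} with an interior gradient estimate for \eqref{a0a}, as in \cite[Theorem~5]{KK00}. Write $d(x):=\mathrm{dist}(x,\partial\Omega)$. Taking the infimum over $x_0\in\partial\Omega$ in Lemmas~\ref{lec3}--\ref{lec4}, and enlarging $L_1$ so that the estimates there also hold for $t\in[1/2,1]$, one first obtains
$$
0\le u(t,x)\le \frac{L_1}{(1+t)^{1/(p-2)}}\,d(x)\,,\qquad (t,x)\in\big[\tfrac12,\infty\big)\times\bar\Omega\,.
$$
Fix $t_0\ge1$ and $x,y\in\bar\Omega$, and set $\lambda:=(1+t_0)^{-1/(p-2)}$. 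If $\min\{d(x),d(y)\}\le 4|x-y|$ then, since $d(x)\le d(y)+|x-y|$ and $d(y)\le d(x)+|x-y|$, one has $\max\{d(x),d(y)\}\le 5|x-y|$, and as $u(t_0,\cdot)\ge0$ the displayed bound gives $|u(t_0,x)-u(t_0,y)|\le 5L_1\lambda\,|x-y|$. It therefore remains only to handle the interior case $\min\{d(x),d(y)\}>4|x-y|$.

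In that case $y$ and the segment $[x,y]$ lie in $B\big(x,\tfrac14 d(x)\big)$, the ball $B_0:=B\big(x,\tfrac12 d(x)\big)$ is contained in $\Omega$, and, using $(1+t)^{-1/(p-2)}\le 2^{1/(p-2)}\lambda$ for $t\ge t_0/2$ together with $d(z)\le\tfrac32 d(x)$ for $z\in B_0$, the displayed bound yields $0\le u\le C_0 L_1\lambda\,d(x)$ on the cylinder $(\tfrac{t_0}{2},t_0]\times B_0$, with $C_0=C_0(p)$; on this cylinder $u$ is a bounded viscosity solution of \eqref{a0a}. When $q<p$ I would then invoke the interior gradient estimate for this quasilinear degenerate parabolic equation, in its scale-invariant form --- this is the point at which \cite{KK00} is used --- to obtain
$$
\sup_{B(x,\frac14 d(x))}\big|\nabla u(t_0,\cdot)\big|\ \le\ \frac{C}{d(x)}\,\sup_{(t_0/2,\,t_0)\times B_0}u\ +\ \big(\text{contributions of }|\nabla u|^q\big)\ \le\ C'L_1\lambda\,,
$$
with $C$ and $C'$ depending only on $N$, $p$, $q$, and $\Omega$; since $[x,y]\subset B\big(x,\tfrac14 d(x)\big)$, this gives $|u(t_0,x)-u(t_0,y)|\le C'L_1\lambda\,|x-y|$. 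For the borderline exponent $q=p$ the same conclusion follows after the change of unknown $h:=e^{u/(p-1)}-1$ already used in Lemma~\ref{lec4}, which turns \eqref{a0a} into the gradient-source-free equation \eqref{c8a}: one applies the interior estimate to $h$ and returns to $u$ via \eqref{c7}. Combining the two cases and exchanging the roles of $x$ and $y$ yields the corollary with $L_2:=\max\{5L_1,C'L_1\}$.

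The step I expect to be the main obstacle is the interior gradient estimate, and chiefly the bookkeeping of constants it entails: a pointwise bound of order $(1+t_0)^{-1/(p-2)}$ on $u/d(x)$ must be upgraded, uniformly in $t_0\ge1$, into a bound of the same order on $\nabla u(t_0,\cdot)$. This forces one to use the estimate in the form adapted to the intrinsic parabolic scaling of $\Delta_p$ --- the invariance $u(t,x)\longmapsto a\,u\big(a^{p-2}c^{\,p}t,\,cx\big)$, here with amplitude $\sim\lambda\,d(x)$ on a ball of radius $\sim d(x)$, whence intrinsic time scale $\sim(1+t_0)\,d(x)^2$ --- and to check that the available time span $t_0/2$ is, uniformly in $t_0\ge1$, at least a fixed fraction (depending on $\mathrm{diam}\,\Omega$) of that intrinsic scale, while the gradient source $|\nabla u|^q$ does not spoil the estimate, which is exactly where the hypothesis $q\le p$ enters, this term being subcritical for the scaling. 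One could alternatively try a doubling-of-variables argument for $(t,x,y)\longmapsto u(t,x)-u(t,y)-L\lambda(t)|x-y|-\sigma/(t-1)$ on $(1,\infty)\times\bar\Omega\times\bar\Omega$, in which Lemmas~\ref{lec3}--\ref{lec4} are precisely what excludes a positive maximum with $x$ or $y$ on $\partial\Omega$ and the remaining interior maximum is analysed through the parabolic theorem on sums \cite[Theorem~8.3]{CIL92} and the degenerate ellipticity of $F_0$; the delicate point would again be the interior part.
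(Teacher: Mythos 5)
Your argument is a genuinely different route from the paper's. The paper dispatches the corollary in one sentence: it invokes the translation--comparison mechanism of Kawohl--Kutev \cite[Theorem~5]{KK00}, i.e.\ compares $u(t,x+h)$ with $u(t,x)$ plus a suitable constant via the comparison principle, using the boundary barrier of Lemmas~\ref{lec3}--\ref{lec4} to control the lateral boundary. You instead split $\bar\Omega\times\bar\Omega$ into a near-boundary region (controlled directly by the barrier and the sign of $u$) and an interior region, and treat the latter with a DiBenedetto-type interior gradient estimate in intrinsic parabolic scaling, with the Cole--Hopf change of unknown $h=e^{u/(p-1)}-1$ to absorb the gradient source when $q=p$. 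The decomposition and the scaling bookkeeping are sound: after rescaling amplitude $\sim\lambda d(x)$ over a ball of radius $\sim d(x)$, the source $|\nabla u|^q$ acquires the coefficient $K=(1+t_0)^{-(q-p+1)/(p-2)}\,d(x)$, which is uniformly bounded for $q\ge p-1$ (in fact it tends to $0$ as $t_0\to\infty$ when $q>p-1$), and the rescaled time depth is bounded below by a positive constant depending only on $\operatorname{diam}\Omega$, so your worry about the intrinsic time scale is benign.

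The one genuine gap is the interior gradient estimate itself. You invoke, without a precise citation, a scale-invariant interior $W^{1,\infty}$ bound for bounded viscosity solutions of
\[
\partial_s w-\Delta_p w=K\,|\nabla w|^q\,,\qquad q\in[p-1,p)\,,\ K\ \text{bounded},
\]
on an intrinsic cylinder of depth that may be smaller than the characteristic one. This is not a standard off-the-shelf result: DiBenedetto's theory covers the pure $p$-Laplacian, and extending it to include the first-order term $K|\nabla w|^q$ --- uniformly in $K$, with the constant carrying the claimed linear dependence on $\sup|w|/\rho$ plus a $\tau$-dependent correction --- requires an argument or a reference that you do not supply. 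This is precisely the point that the paper's route circumvents: translating and comparing uses only the comparison principle and the boundary barriers, not interior regularity theory. Your closing alternative (doubling of variables with \cite[Theorem~8.3]{CIL92}, with Lemmas~\ref{lec3}--\ref{lec4} excluding boundary maxima) is in fact closer in spirit to what the paper actually does, and would avoid the missing interior estimate; as written, though, the primary argument rests on an unreferenced interior bound and is therefore incomplete.
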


\subsection{Convergence}\label{sec:cv}

Let $U$ be the solution to the $p$-Laplacian equation with homogeneous Dirichlet boundary conditions
\begin{eqnarray}
\partial_t U - \Delta_p U & = & 0\,, \qquad (t,x)\in Q\,, \label{x0a}\\
U & = & 0\,, \qquad (t,x)\in (0,\infty)\times\partial\Omega\,, \label{x0b} \\
U(0) &  = & u_0\,, \qquad x\in\Omega\,. \label{x0c}
\end{eqnarray}
Owing to the nonnegativity of $|\nabla u|^q$, the comparison principle \cite[Theorem~8.2]{CIL92} ensures that
\begin{equation}\label{a4a}
0 \le U(t,x) \le u(t,x)\,, \qquad (t,x)\in [0,\infty)\times\bar{\Omega}\,.
\end{equation}

We introduce the scaling variable $s=\ln{t}$ for $t>0$ and the new unknown functions $v$ and $V$ defined by
\begin{eqnarray}
u(t,x) & = & t^{-1/(p-2)}\ v(\ln{t},x)\,, \qquad (t,x)\in (0,\infty)\times\bar{\Omega}\,, \label{x1} \\
U(t,x) & = & t^{-1/(p-2)}\ V(\ln{t},x)\,, \qquad (t,x)\in (0,\infty)\times\bar{\Omega}\,, \label{x5}
\end{eqnarray}
Then $v$ is a viscosity solution to
\begin{eqnarray}
\partial_s v - \Delta_p v - e^{-(q-p+1)s/(p-2)}\ |\nabla v|^q - \frac{v}{p-2}& = & 0\,, \qquad (s,x)\in Q\,, \label{x2}\\
v & = & 0\,, \qquad (s,x)\in (0,\infty)\times\partial\Omega\,, \label{x3}\,, \\
v(0) &  = & u(1)\,, \qquad x\in\Omega\,. \label{x4}
\end{eqnarray}
In addition, owing to \eqref{c1} (if $q=p-1$), \eqref{c3} (if $q>p-1$), Corollary~\ref{coc5}, and \eqref{a4a}, we have
\begin{eqnarray}
V(s,x) \le v(s,x) & \le & C_1\,, \qquad (s,x)\in [0,\infty)\times\bar{\Omega}\,, \label{x6}\\
|v(s,x)-v(s,y)| & \le & L_2\ |x-y|\,, \qquad (s,x,y)\in [0,\infty)\times \bar{\Omega}\times \bar{\Omega}\,.\label{x7}
\end{eqnarray}

We next define for $\varepsilon\in (0,1)$
$$
w_\e(s,x) := v\left( \frac{s}{\e} , x \right) \,, \qquad (s,x)\in [0,\infty)\times\bar{\Omega}\,,
$$
and the half-relaxed limits
$$
w_*(x) := \liminf_{(\sigma,y,\e)\to (s,x,0)} w_\e(\sigma,y)\,, \qquad w^*(x) := \limsup_{(\sigma,y,\e)\to (s,x,0)} w_\e(\sigma,y)\,,
$$
for $(s,x)\in (0,\infty)\times\bar{\Omega}$. Observe that $w_*$ and $w^*$ are well-defined according to \eqref{x6} and indeed do not depend on $s>0$. In addition, it readily follows from \eqref{x3} and \eqref{x7} that
\begin{equation}\label{x8}
w_*(x)=w^*(x)=0\,, \quad x\in\partial\Omega\,.
\end{equation}
Also, $w_\e$ is a solution to
\begin{eqnarray}
& & \e\ \partial_s w_\e - \Delta_p w_\e - e^{-((q-p+1)s)/((p-2)\e)}\ |\nabla w_\e|^q - \frac{w_\e}{p-2} = 0 \;\;\mbox{ in }\;\; Q\,,\label{x9a} \\
& & w_\e = 0 \;\;\mbox{ on }\;\; (0,\infty)\times\partial\Omega\,, \label{x9b} \\
& & w_\e(0) = u(1) \;\;\mbox{ in }\;\; \Omega\,. \label{x9c}
\end{eqnarray}

\medskip

At this point, we distinguish the two cases $q=p-1$ and $q\in (p-1,p]$:

\medskip

\noindent\textbf{Case~1: $q=p-1$.} We use the stability of semicontinuous viscosity solutions \cite[Lemma~6.1]{CIL92} to deduce from \eqref{x9a} that
\begin{eqnarray}
& & w_* \;\mbox{ is a supersolution to \eqref{b1} in }\; \Omega\,, \label{x10} \\
& & w^* \;\mbox{ is a subsolution to \eqref{b1} in }\; \Omega\,. \label{x11}
\end{eqnarray}
In addition, as $V(s)\rightarrow f_0$ in $L^\infty(\Omega)$ as
$s\to\infty$ by \cite[Theorem~1.3]{MV94}, it also follows from
\eqref{x6} and the definition of $w_*$ and $w^*$ that
\begin{equation}\label{x12}
f_0(x) \le w_*(x) \le w^*(x) \le C_1\,, \qquad x\in\bar{\Omega}\,.
\end{equation}
Since $f_0>0$ in $\Omega$ by \cite[Theorem~1.1]{MV94}, we deduce from \eqref{x12} that $w_*$ and $w^*$ are positive and bounded in $\Omega$ and vanish on $\partial\Omega$ by \eqref{x8}. Owing to \eqref{x10} and \eqref{x11}, we are then in a position to apply Lemma~\ref{leb1} to conclude that $w^*\le w_*$ in $\bar{\Omega}$. Recalling \eqref{x12}, we have thus shown that $w_*=w^*$ in $\bar{\Omega}$. Setting $f:=w_*=w^*$, we infer from \eqref{x8}, \eqref{x10}, \eqref{x11}, and \eqref{x12} that $f\in C_0(\bar{\Omega})$ is a positive viscosity solution to \eqref{b1} so that it solves \eqref{a6}. We have thus proved the existence of a positive solution to \eqref{a6}, its uniqueness being granted by Corollary~\ref{cob2}. A further consequence of the equality $w_*=w^*$ is that $\|w_\e(1)-f\|_\infty\rightarrow 0$ as $\e\to 0$ (see, e.g., \cite[Lemme~4.1]{Bl94} or \cite[Lemma~5.1.9]{BdCD97}). In other words,
\begin{equation}\label{x13}
\lim_{s\to\infty} \|v(s)-f\|_\infty=0\,,
\end{equation}
which implies \eqref{a5} once written in terms of $u$.

Finally, a straightforward consequence of \eqref{x7} and \eqref{x13} is that $|f(x)-f(y)|\le L_2\ |x-y|$ for $(x,y)\in \bar{\Omega}\times\bar{\Omega}$. Consequently, $f$ is Lipschitz continuous in $\bar{\Omega}$ which completes the proof of Theorem~\ref{tha1}.

\medskip

\noindent\textbf{Case~2: $q\in (p-1,p]$.} We use once more the stability of semicontinuous viscosity solutions \cite[Lemma~6.1]{CIL92} to deduce from \eqref{x9a} that
\begin{eqnarray}
& & w_* \;\mbox{ is a supersolution to \eqref{b10} in }\; \Omega\,, \label{x14} \\
& & w^* \;\mbox{ is a subsolution to \eqref{b10} in }\; \Omega\,. \label{x15}
\end{eqnarray}
In addition, as $V(s)\rightarrow f_0$ in $L^\infty(\Omega)$ as $s\to\infty$ by \cite[Theorem~1.3]{MV94}, it also follows from \eqref{x6} and the definition of $w_*$ and $w^*$ that
\begin{equation}\label{x16}
f_0(x) \le w_*(x) \le w^*(x) \le C_1\,, \qquad x\in\bar{\Omega}\,.
\end{equation}
Since $f_0>0$ in $\Omega$ by \cite[Theorem~1.1]{MV94} and a solution to \eqref{b10}, we apply Lemma~\ref{leb3} to conclude that $w^*\le f_0$ in $\bar{\Omega}$. Recalling \eqref{x16}, we have proved that $w_*=w^*=f_0$ in $\bar{\Omega}$. We then complete the proof of Theorem~\ref{tha2} for $q\in (p-1,p]$ in the same way as that of Theorem~\ref{tha1}.

\subsection{Improved upper bounds}\label{sec:iub}

Interestingly, the positive solution $f$ to \eqref{a6} can be also used to construct supersolutions to \eqref{a0a}-\eqref{a0b} for $q>p-1$. We first consider the case $q\in (p-1,p]$ and postpone the case $q>p$ to Section~\ref{sec:wp2} where it is a crucial argument for the global existence of solutions.

\begin{proposition}\label{pre1}
Assume that $q\in (p-1,p]$ and there    are $\beta\in (0,1]$ and $B>0$ such that
\begin{equation}\label{e1}
u_0(x) \le B\ f(x)^\beta\,, \qquad x\in\bar{\Omega}\,.
\end{equation}
Then there is $\gamma\in (0,\beta]$ such that
\begin{equation}\label{e2}
u(t,x) \le \frac{\|f\|_\infty^{1-\gamma}}{\gamma \left( \|f\|_\infty^{p-2} + \gamma t \right)^{1/(p-2)}}\ f(x)^\gamma \le \frac{f(x)^\gamma}{\gamma \|f\|_\infty^\gamma}\,, \qquad (t,x)\in [0,\infty)\times \bar{\Omega}\,.
\end{equation}
\end{proposition}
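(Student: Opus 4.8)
The plan is to exhibit a separate–variables supersolution $\Sigma(t,x):=\tau(t)\,f(x)^\gamma$ to \eqref{a0a}--\eqref{a0b} lying above $u_0$ at $t=0$, and then to invoke the comparison principle \cite[Theorem~8.2]{CIL92}. I would choose
$$\gamma := \min\Bigl\{ \beta,\ \tfrac{p-2}{p-1},\ \bigl(B\,\|f\|_\infty^\beta\bigr)^{-1}\Bigr\}\in(0,\beta],\qquad \kappa := \frac{\gamma^{p-1}}{(p-2)\,\|f\|_\infty^{(1-\gamma)(p-2)}},$$
and $\tau(t):=\frac{\|f\|_\infty^{1-\gamma}}{\gamma}\bigl(\|f\|_\infty^{p-2}+\gamma t\bigr)^{-1/(p-2)}$, which is smooth, positive and non-increasing, solves $\tau'=-\kappa\tau^{p-1}$, and satisfies $\tau(0)=1/(\gamma\|f\|_\infty^\gamma)$. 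In particular $\tau(t)\,f(x)^\gamma\le\tau(0)\,\|f\|_\infty^\gamma=1/\gamma$ on $[0,\infty)\times\bar\Omega$, a bound I expect to be decisive for absorbing the gradient term.

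The core of the argument is to check that $\Sigma$ is a viscosity supersolution to \eqref{a0a} in $Q$. Formally, writing $g:=f^\gamma$ and inserting $-\Delta_p f=|\nabla f|^{p-1}+f/(p-2)$ (which holds since $f$ solves \eqref{a6}) into the identity $-\Delta_p(f^\gamma)=\gamma^{p-1}f^{(\gamma-1)(p-1)}\bigl[(1-\gamma)(p-1)|\nabla f|^p/f-\Delta_p f\bigr]$, then re-expressing the powers of $f$ through $g$ and using $f\le\|f\|_\infty$ in the zero–order term, I would get
$$-\Delta_p g\ \ge\ \frac{(1-\gamma)(p-1)}{\gamma}\,\frac{|\nabla g|^p}{g}\ +\ |\nabla g|^{p-1}\ +\ \kappa\,g\qquad\text{in }\Omega.$$
Since $\partial_t\Sigma=-\kappa\tau^{p-1}g$, $\Delta_p\Sigma=\tau^{p-1}\Delta_p g$ and $|\nabla\Sigma|^q=\tau^q|\nabla g|^q$, the choice $\tau'=-\kappa\tau^{p-1}$ cancels the $\kappa g$ contributions and reduces $\partial_t\Sigma-\Delta_p\Sigma-|\nabla\Sigma|^q\ge0$ to
$$\tau^{p-1}\Bigl[\tfrac{(1-\gamma)(p-1)}{\gamma}\,\tfrac{|\nabla g|^p}{g}+|\nabla g|^{p-1}\Bigr]\ \ge\ \tau^q|\nabla g|^q,$$
which is trivial where $\nabla g=0$ and, elsewhere, after dividing by $\tau^{p-1}|\nabla g|^{p-1}$ and setting $Z:=\gamma^{-1}|\nabla g|/g$, reduces — using $\tau|\nabla g|=(\tau g)(|\nabla g|/g)\le Z$ and $q-p+1\in(0,1]$ — to the elementary scalar inequality $(1-\gamma)(p-1)Z+1\ge Z^{q-p+1}$ for $Z\ge0$, which holds precisely because $(1-\gamma)(p-1)\ge1$, i.e. $\gamma\le(p-2)/(p-1)$. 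To make this rigorous I would note that $r\mapsto r^\gamma$ is smooth and increasing on $(0,\infty)$ and that $f>0$ in $\Omega$, so the change of unknown $g=f^\gamma$ together with the time rescaling by $\tau$ turns test functions for $\Sigma$ into admissible test functions for $f$, whence the above computations transfer to the viscosity setting — the mechanism already used for $h=e^{u/(p-1)}-1$ in Lemma~\ref{lec4} via \cite[Corollaire~2.1]{Bl94} and \cite[Proposition~2.5]{BdCD97}.

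It then remains to compare the data: on $(0,\infty)\times\partial\Omega$ one has $\Sigma=\tau f^\gamma=0=u$, while on $\{0\}\times\bar\Omega$, since $u_0\le Bf^\beta=Bf^\gamma f^{\beta-\gamma}\le Bf^\gamma\|f\|_\infty^{\beta-\gamma}$ and the choice $\gamma\le(B\|f\|_\infty^\beta)^{-1}$ forces $B\|f\|_\infty^{\beta-\gamma}\le 1/(\gamma\|f\|_\infty^\gamma)=\tau(0)$, one obtains $u_0\le\Sigma(0,\cdot)$. The comparison principle \cite[Theorem~8.2]{CIL92} then gives $u\le\Sigma$ on $[0,\infty)\times\bar\Omega$, which is the first inequality in \eqref{e2}; the second follows from $\tau(t)\le\tau(0)$. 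I expect the two delicate points to be (i) the rigorous passage to viscosity solutions in the composition $g=f^\gamma$ — harmless in $\Omega$ since $f$ is bounded away from $0$ on compact subsets, but worth spelling out — and (ii) the bookkeeping of constants, so that the constraint $\gamma\le(p-2)/(p-1)$ (where the hypothesis $q\le p$ enters, through $q-p+1\in(0,1]$) and the bound $\tau g\le 1/\gamma$ (where the specific time shift $\|f\|_\infty^{p-2}$ is used) are exactly what the computation requires; the verification of the scalar inequality itself is routine once the reductions are in place.
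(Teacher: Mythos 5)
Your proposal is correct and follows essentially the same strategy as the paper's proof: take $\gamma$ as in \eqref{e3}, verify that $g=f^\gamma$ solves the transformed equation via the viscosity-change-of-variable results in \cite{Bl94,BdCD97}, show that the separable ansatz $\tau(t)f(x)^\gamma$ is a supersolution, and conclude by comparison. The only (cosmetic) difference is organizational: the paper groups the residual into $R_1$ and $R_2$ and dispatches $R_1\ge0$ by Young's inequality, whereas you set up the ODE $\tau'=-\kappa\tau^{p-1}$ so the zero-order term cancels upfront, and then reduce the gradient terms to the elementary scalar inequality $(1-\gamma)(p-1)Z+1\ge Z^{q-p+1}$; the constraints used ($\gamma\le(p-2)/(p-1)$, $q-p+1\in(0,1]$, and the bound $\tau g\le1/\gamma$, which is the role of the time shift $\|f\|_\infty^{p-2}$) are identical in both write-ups.
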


\begin{proof}
We fix $\gamma\in (0,1)$ such
\begin{equation}\label{e3}
\gamma \le \min{\left\{ \frac{p-2}{p-1} , \beta , \frac{1}{B \|f\|_\infty^\beta} \right\}}\,,
\end{equation}
and, for $(t,x)\in [0,\infty)\times\bar{\Omega}$, we define
$$
\Sigma(t,x) = \frac{A f(x)^\gamma}{\gamma (1+\delta t)^{1/(p-2)}} \;\;\mbox{ with }\;\; A := \frac{1}{\|f\|_\infty^\gamma}\;\;\mbox{ and }\;\; \delta = \frac{\gamma}{\|f\|_\infty^{p-2}}\,.
$$
We claim that
\begin{equation}\label{e4}
\Sigma \;\mbox{ is a supersolution to \eqref{a0a} in }\; Q \;\mbox{ for }\; q\in [p-1,p]\,.
\end{equation}
Indeed, let $\phi\in C^2(Q)$ and consider $(t_0,x_0)\in Q$ where $\Sigma-\phi$ has a local minimum. Since $\Sigma$ is smooth with respect to the time variable, this property implies that
\begin{equation}\label{e5}
\partial_t \phi(t_0,x_0) = - \frac{\delta A}{\gamma (p-2)}\ \frac{f(x_0)^\gamma}{\left( 1+\delta t_0 \right)^{(p-1)/(p-2)}}\,,
\end{equation}
and that $x\mapsto \Sigma(t_0,x)-\phi(t_0,x)$ has a local minimum at $x_0$. In other words, the function
$x\mapsto f(x)^\gamma - \gamma\left( 1+\delta t_0 \right)^{1/(p-2)}\ \phi(t_0,x)/A$ has a local minimum at $x_0$ and we infer from \eqref{a6}, the positivity of $f$ in $\Omega$, and \cite[Corollaire~2.1]{Bl94} (or \cite[Proposition~2.5]{BdCD97}) that $g:=f^\gamma$ is a viscosity solution to
$$
-\Delta_p g - \frac{(1-\gamma)(p-1)}{\gamma}\ \frac{|\nabla g|^p}{g} - |\nabla g|^{p-1} - \frac{\gamma^{p-1}}{p-2}\ g^{(1-(1-\gamma)(p-1))/\gamma} = 0 \;\;\mbox{ in }\;\;\Omega\,.
$$
Consequently,
\begin{eqnarray*}
& & - \frac{\gamma^{p-1}}{A^{p-1}}\ \left( 1+\delta t_0 \right)^{(p-1)/(p-2)}\ \Delta_p\phi(t_0,x_0) - \frac{(1-\gamma)(p-1)\gamma^{p-1}}{A^p}\ (1+\delta t_0)^{p/(p-2)} \frac{|\nabla\phi(t_0,x_0)|^p}{f(x_0)^\gamma} \\
& & - \frac{\gamma^{p-1}}{A^{p-1}}\ \left( 1+\delta t_0 \right)^{(p-1)/(p-2)}\ |\nabla\phi(t_0,x_0)|^{p-1} - \frac{\gamma^{p-1}}{p-2}\ f(x_0)^{1-(1-\gamma)(p-1)}  \ge 0\,,
\end{eqnarray*}
from which we deduce, since $\gamma\in (0,1)$,
\begin{eqnarray}
- \Delta_p\phi(t_0,x_0) & \ge & \frac{(1-\gamma)(p-1)}{A}\ (1+\delta t_0)^{1/(p-2)} \frac{|\nabla\phi(t_0,x_0)|^p}{f(x_0)^\gamma} \label{e6} \\
& + & |\nabla\phi(t_0,x_0)|^{p-1} + \frac{A^{p-1}}{p-2}\ \frac{f(x_0)^{1-(1-\gamma)(p-1)}}{\left( 1+\delta t_0 \right)^{(p-1)/(p-2)}}  \,. \nonumber
\end{eqnarray}
By \eqref{e5} and \eqref{e6}, we have
\begin{equation}\label{e7}
\partial_t\phi(t_0,x_0) - \Delta_p\phi(t_0,x_0) - |\nabla\phi(t_0,x_0)|^q \ge \frac{|\nabla\phi(t_0,x_0)|^{p-1}}{f(x_0)^\gamma}\ R_1 + \frac{A^{p-1} f(x_0)^{1-(1-\gamma)(p-1)}}{\left( 1+\delta t_0 \right)^{(p-1)/(p-2)}}\ \frac{R_2}{p-2}\,,
\end{equation}
with
\begin{eqnarray*}
R_1 & := & \frac{(1-\gamma)(p-1)}{A}\ (1+\delta t_0)^{1/(p-2)} |\nabla\phi(t_0,x_0)| + f(x_0)^\gamma - f(x_0)^\gamma\  |\nabla\phi(t_0,x_0)|^{q-p+1}\,, \\
R_2 & := & 1 - \frac{\delta}{\gamma A^{p-2}}\ f(x_0)^{(1-\gamma)(p-2)}\,.
\end{eqnarray*}
On the one hand, \eqref{e3} guarantees that $(1-\gamma)(p-1)\ge 1$ which, together with Young's inequality and the assumption $q\in (p-1,p]$, leads us to
$$
R_1 \ge \|f\|_\infty^\gamma\ |\nabla\phi(t_0,x_0)| + f(x_0)^\gamma - (q-p+1)\ f(x_0)^\gamma\  |\nabla\phi(t_0,x_0)| - (p-q)\ f(x_0)^\gamma \ge 0\,.
$$
On the other hand, the choice of $A$ and $\delta$ gives
$$
R_2 = 1 - \left( \frac{f(x_0)}{\|f\|_\infty} \right)^{(1-\gamma)(p-2)} \ge 0\,.
$$
Combining the previous two inequalities with \eqref{e7} completes the proof of the claim \eqref{e4}.

Now, $u=\Sigma=0$ on $(0,\infty)\times\partial\Omega$ while, since $\beta\ge\gamma$, we infer from \eqref{e3} and the choice of $A$ that, for $x\in\bar{\Omega}$,
$$
u_0(x) \le B\ f(x)^\beta = \frac{A f(x)^\gamma}{\gamma}\ \frac{\gamma B f(x)^{\beta-\gamma}}{A} \le \Sigma(0,x)\ \frac{\gamma B \|f\|_\infty^{\beta-\gamma}}{A} \le \Sigma(0,x)\,.
$$
We then deduce from the comparison principle \cite[Theorem~8.2]{CIL92} that $u(t,x)\le\Sigma(t,x)$ for $(t,x)\in [0,\infty)\times\bar{\Omega}$ and the proof of Proposition~\ref{pre1} is complete.
\end{proof}

\mysection{Well-posedness and blowup: $q>p$}\label{sec:wpbu}

\subsection{Well-posedness}\label{sec:wp2}

We finally turn to the case $q>p$ and first show that a suitable multiple of the positive solution $f$ to \eqref{a6} allows us to construct a supersolution \eqref{a0a} when $q>p$ which vanishes identically on the boundary of $\Omega$.

\begin{lemma}\label{lef1}
Assume that $q>p-1$. Recalling that $f\in C_0(\bar{\Omega})$ is the unique positive solution to \eqref{a6}, the function
$$
\mathcal{F}(t,x) := \frac{f(x)}{\left( \|\nabla f\|_\infty^{p-2} + t \right)^{1/(p-2)}}\,, \qquad (t,x)\in [0,\infty)\times\bar{\Omega}\,,
$$
is a supersolution to \eqref{a0a} in $Q$.
\end{lemma}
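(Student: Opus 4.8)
The plan is to check the viscosity supersolution inequality for \eqref{a0a} in $Q$ directly at the level of test functions, exploiting the separated-variables structure $\mathcal F(t,x)=g(t)\,f(x)$, where $g(t):=\bigl(\|\nabla f\|_\infty^{p-2}+t\bigr)^{-1/(p-2)}$ is smooth and positive on $[0,\infty)$, together with the fact that $f$ is a positive viscosity solution — hence a supersolution — of the elliptic equation in \eqref{a6}. So let $\phi\in C^2(Q)$ and let $(t_0,x_0)\in Q$ be a point at which $\mathcal F-\phi$ has a local minimum; I have to show
\[
\partial_t\phi(t_0,x_0)+F\bigl(\nabla\phi(t_0,x_0),D^2\phi(t_0,x_0)\bigr)\ge 0 .
\]
Since $g$ is smooth and positive, minimality forces $\partial_t\phi(t_0,x_0)=g'(t_0)\,f(x_0)=-\,g(t_0)^{p-1}f(x_0)/(p-2)$ and, after dividing by $g(t_0)$, shows that the $C^2$ function $\psi:=\phi(t_0,\cdot)/g(t_0)$ touches $f$ from below at $x_0$.

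Two observations then drive the argument. First, since $f\in W^{1,\infty}(\Omega)$ it is locally Lipschitz in $\Omega$ with local Lipschitz constant at most $\|\nabla f\|_\infty$, so a one-sided difference quotient argument at the interior minimum point $x_0$ of $f-\psi$ gives $|\nabla\psi(x_0)|\le\|\nabla f\|_\infty$; combined with the elementary bound $g(t_0)\,\|\nabla f\|_\infty\le 1$, which holds by the very choice of $g$, this yields the crucial estimate
\[
|\nabla\phi(t_0,x_0)|=g(t_0)\,|\nabla\psi(x_0)|\le 1 .
\]
Second, the supersolution property of $f$ for \eqref{a6} at $x_0$ with test function $\psi$ reads $F_0(\nabla\psi(x_0),D^2\psi(x_0))-|\nabla\psi(x_0)|^{p-1}-f(x_0)/(p-2)\ge 0$; here $\nabla\psi(x_0)\ne 0$, since otherwise this inequality would force $f(x_0)\le 0$, contradicting $f(x_0)>0$ and $p>2$ (in particular $\nabla\phi(t_0,x_0)\ne 0$). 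Using the homogeneity relations $F_0(\lambda\xi,\lambda X)=\lambda^{p-1}F_0(\xi,X)$ and $|\lambda\xi|^{p-1}=\lambda^{p-1}|\xi|^{p-1}$ with $\lambda=1/g(t_0)$, and multiplying through by $g(t_0)^{p-1}$, this becomes
\[
F_0\bigl(\nabla\phi(t_0,x_0),D^2\phi(t_0,x_0)\bigr)\ge|\nabla\phi(t_0,x_0)|^{p-1}+\frac{g(t_0)^{p-1}f(x_0)}{p-2} .
\]

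Putting the pieces together finishes the proof: evaluating everything at $(t_0,x_0)$ and using $\partial_t\phi=-\,g(t_0)^{p-1}f(x_0)/(p-2)$ to cancel the last term,
\[
\partial_t\phi+F\bigl(\nabla\phi,D^2\phi\bigr)=\partial_t\phi+F_0\bigl(\nabla\phi,D^2\phi\bigr)-|\nabla\phi|^q\ge|\nabla\phi|^{p-1}-|\nabla\phi|^q\ge 0 ,
\]
the last inequality because $|\nabla\phi(t_0,x_0)|\le 1$ and $q>p-1$. This is exactly the supersolution inequality for \eqref{a0a}. The genuinely delicate point is the gradient bound $|\nabla\phi(t_0,x_0)|\le 1$: it is here that the Lipschitz information on $f$ meets the tailor-made time weight $\bigl(\|\nabla f\|_\infty^{p-2}+t\bigr)^{-1/(p-2)}$ — any smaller normalisation would break the argument — and one must also rule out the degenerate case $\nabla\phi(t_0,x_0)=0$, which is excluded precisely by the positivity of $f$ in $\Omega$.
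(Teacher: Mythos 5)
Your proof is correct and follows essentially the same route as the paper: you use the separated structure $\mathcal F = g(t)f(x)$ to read off $\partial_t\phi$ at the touching point, invoke the Lipschitz bound $\|\nabla f\|_\infty$ together with the specific time weight to get $|\nabla\phi(t_0,x_0)|\le 1$, feed the rescaled test function into the viscosity supersolution inequality from \eqref{a6}, and conclude via $|\nabla\phi|^{p-1}-|\nabla\phi|^q\ge 0$ for $q>p-1$. The only cosmetic difference is that you explicitly rule out $\nabla\psi(x_0)=0$ before invoking the homogeneity of $F_0$, a step the paper leaves implicit since (for $p>2$) the operator extends continuously to $\xi=0$ and the supersolution inequality automatically excludes that case when $f(x_0)>0$.
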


\begin{proof}
Let $\phi\in C^2(Q)$ and consider $(t_0,x_0)\in Q$ where $\mathcal{F}-\phi$ has a local minimum. Since $\mathcal{F}$ is smooth with respect to the time variable and Lipschitz continuous with respect to the space variable, this property implies that
\begin{eqnarray}
\partial_t \phi(t_0,x_0) & = & - \frac{1}{p-2}\ \frac{f(x_0)}{\left( \|\nabla f\|_\infty^{p-2} + t_0 \right)^{(p-1)/(p-2)}}\,, \label{f1}\\
|\nabla\phi(t_0,x_0)| & \le & \frac{\|\nabla f\|_\infty}{\left( \|\nabla f\|_\infty^{p-2} + t \right)^{1/(p-2)}} \le 1\,, \label{f2}
\end{eqnarray}
and that $x\mapsto \mathcal{F}(t_0,x)-\phi(t_0,x)$ has a local minimum at $x_0$. In other words, the function
$x\mapsto f(x) - \left( \|\nabla f\|_\infty^{p-2} + t_0 \right)^{1/(p-2)}\ \phi(t_0,x)$ has a local minimum at $x_0$ and we infer from \eqref{a6} that
$$
- \left( \|\nabla f\|_\infty^{p-2} + t_0 \right)^{(p-1)/(p-2)}\ \Delta_p\phi(t_0,x_0) - \left( \|\nabla f\|_\infty^{p-2} + t_0 \right)^{(p-1)/(p-2)}\ |\nabla\phi(t_0,x_0)|^{p-1} - \frac{f(x_0)}{p-2} \ge 0\,,
$$
which, together with \eqref{f1}, gives
\begin{equation}\label{f3}
\partial_t \phi(t_0,x_0) - \Delta_p \phi(t_0,x_0) - |\nabla\phi (t_0,x_0)|^{p-1} \ge 0\,.
\end{equation}
We then infer from \eqref{f2}, \eqref{f3}, and the property $q>p-1$ that
$$
\partial_t \phi(t_0,x_0) - \Delta_p \phi(t_0,x_0) - |\nabla\phi (t_0,x_0)|^q \ge |\nabla\phi (t_0,x_0)|^{p-1} \left( 1 - |\nabla\phi (t_0,x_0)|^{q-p+1} \right) \ge 0\,,
$$
which completes the proof of Lemma~\ref{lef1}.
\end{proof}

\begin{proposition}\label{prf2}
Assume that $q>p$ and
\begin{equation}\label{f4}
u_0(x) \le \frac{f(x)}{\|\nabla f\|_\infty}\,, \qquad x\in\bar{\Omega}\,.
\end{equation}
Then there is a unique solution $u$ to \eqref{a0a}-\eqref{a0c} in the sense of Definition~\ref{defa0} and it satisfies
\begin{equation}\label{e2a}
u(t,x) \le \frac{f(x)}{\left( \|\nabla f\|_\infty^{p-2} + t \right)^{1/(p-2)}} \le \frac{f(x)}{\|\nabla f\|_\infty}\,, \qquad (t,x)\in [0,\infty)\times \bar{\Omega}\,.
\end{equation}
\end{proposition}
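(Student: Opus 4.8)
The plan is to construct the solution by Perron's method, using as upper barrier the supersolution $\mathcal{F}$ of Lemma~\ref{lef1}, as lower barrier the solution $U$ of the $p$-Laplacian equation \eqref{x0a}--\eqref{x0c} with initial datum $u_0$, and as local upper barriers near $\{t=0\}$ a family of affine supersolutions; uniqueness and the bound \eqref{e2a} then follow from the comparison principle \cite[Theorem~8.2]{CIL92}.

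First I would record the barriers. By Lemma~\ref{lef1}, $\mathcal{F}$ is a supersolution to \eqref{a0a} in $Q$; since $f\in C_0(\bar\Omega)$, it vanishes on $(0,\infty)\times\partial\Omega$, and by \eqref{f4} one has $\mathcal{F}(0,\cdot)=f/\|\nabla f\|_\infty\ge u_0$ in $\bar\Omega$. Because $|\nabla U|^q\ge 0$, the $p$-Laplacian solution $U$ is a subsolution to \eqref{a0a} with $U(0,\cdot)=u_0$, vanishing on $(0,\infty)\times\partial\Omega$; moreover the computation in the proof of Lemma~\ref{lef1} shows that $\mathcal{F}$ is also a supersolution of the $p$-Laplacian equation (see \eqref{f3}), so the comparison principle gives $0\le U\le\mathcal{F}$ on $[0,\infty)\times\bar\Omega$. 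Finally, for $\e\in(0,1)$ I would pick $\Phi_\e\in C^\infty(\bar\Omega)$ with $u_0\le\Phi_\e\le u_0+\e$ in $\bar\Omega$ (mollify the extension of $u_0$ by zero) and set $\overline{u}_\e(t,x):=\Phi_\e(x)+C_\e t$ with $C_\e>0$ large enough that $C_\e-\Delta_p\Phi_\e-|\nabla\Phi_\e|^q\ge 1$ on $\bar\Omega$; using $F_0(0,X)=0$ at the critical points of $\Phi_\e$, one checks that $\overline{u}_\e$ is a smooth (hence viscosity) supersolution to \eqref{a0a} with $\overline{u}_\e(0,\cdot)=\Phi_\e\ge u_0$ and $\overline{u}_\e\ge 0$ on $(0,\infty)\times\partial\Omega$.

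Then I would run Perron's method on the class $\mathcal{S}$ of viscosity subsolutions $w$ to \eqref{a0a} in $Q$ with $0\le w\le\mathcal{F}$ on $[0,\infty)\times\bar\Omega$ and $w(0,\cdot)\le u_0$ in $\bar\Omega$; it contains $0$ and $U$, and is bounded, so $u:=\sup_{w\in\mathcal{S}}w$ satisfies $U\le u\le\mathcal{F}$. For any $w\in\mathcal{S}$ and $\e\in(0,1)$, the pair $(w,\overline{u}_\e)$ is ordered on the parabolic boundary, so \cite[Theorem~8.2]{CIL92} gives $w\le\overline{u}_\e$, hence $u\le\overline{u}_\e$ and, letting $\e\to 0$, $u^*(0,\cdot)\le u_0$; on the other hand $U\le u$ forces $u_*(0,\cdot)\ge u_0$, while being squeezed between $U$ and $\mathcal{F}$ forces $u^*=u_*=0$ on $(0,\infty)\times\partial\Omega$. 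Since, by the standard Perron argument, $u^*$ and $u_*$ are respectively a sub- and a supersolution to \eqref{a0a} with $u^*\le u_*$ on the parabolic boundary, \cite[Theorem~8.2]{CIL92} yields $u^*\le u_*$ on $[0,\infty)\times\bar\Omega$; hence $u=u^*=u_*\in C([0,\infty)\times\bar\Omega)$ is a solution to \eqref{a0a}--\eqref{a0c} in the sense of Definition~\ref{defa0}. Uniqueness is immediate from \cite[Theorem~8.2]{CIL92}, two solutions being simultaneously a sub- and a supersolution with the same parabolic boundary values. Finally, $u$ is a subsolution and $\mathcal{F}$ a supersolution to \eqref{a0a}, they both vanish on $(0,\infty)\times\partial\Omega$, and $u(0,\cdot)=u_0\le f/\|\nabla f\|_\infty=\mathcal{F}(0,\cdot)$ by \eqref{f4}; \cite[Theorem~8.2]{CIL92} then gives $u\le\mathcal{F}$ on $[0,\infty)\times\bar\Omega$, i.e., \eqref{e2a}, the remaining inequality there being obvious.

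The comparison computations and the verification that $\overline{u}_\e$ is a supersolution are routine; I expect the main obstacle to be the attainment of the initial datum. Indeed, Perron's construction only ``sees'' the upper barrier $\mathcal{F}$, whose initial trace $f/\|\nabla f\|_\infty$ merely dominates $u_0$, so a priori it only gives $u_0\le u(0^+,\cdot)\le f/\|\nabla f\|_\infty$, and one must certify the equality $u(0,\cdot)=u_0$. This is precisely the role of the lower barrier $U$ (with $U(0,\cdot)=u_0$) and of the family $\overline{u}_\e$ of upper barriers that flatten down to $u_0$ as $\e\to 0$; with these in hand, everything else is the standard interplay between Perron's method and \cite[Theorem~8.2]{CIL92}.
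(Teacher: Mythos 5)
Your proposal is correct and takes essentially the same route as the paper: squeeze between the subsolution $U$ (the $p$-Laplacian flow) and the supersolution $\mathcal{F}$ of Lemma~\ref{lef1}, ordered via the comparison principle for \eqref{x0a}, then apply Perron's method and conclude uniqueness and \eqref{e2a} by \cite[Theorem~8.2]{CIL92}. The only difference is that you spell out the auxiliary barriers $\overline{u}_\e$ needed to certify attainment of the initial datum, a standard detail the paper folds into the phrase ``classical Perron method.''
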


\begin{proof}
On the one hand, the solution $U$ to the $p$-Laplacian equation \eqref{x0a}-\eqref{x0c} is clearly a subsolution to \eqref{a0a} in $Q$. On the other hand, the function $\mathcal{F}$ defined in Lemma~\ref{lef1} is a supersolution to \eqref{a0a} in $Q$ by Lemma~\ref{lef1} and is thus also a supersolution to \eqref{x0a}. Since $U=\mathcal{F}=0$ on $(0,\infty)\times\partial\Omega$ and $U(0,x)=u_0(x)\le \mathcal{F}(0,x)$ for $x\in\bar{\Omega}$ by \eqref{f4}, the comparison principle \cite[Theorem~8.2]{CIL92} applied to the $p$-Laplacian equation \eqref{x0a} ensures that $U\le \mathcal{F}$ in $[0,\infty)\times\bar{\Omega}$. This property and the simultaneous vanishing of $U$ and $\mathcal{F}$ on $(0,\infty)\times\partial\Omega$ allow us to use the classical Perron method to establish the existence of a solution $u$ to \eqref{a0a}-\eqref{a0c} in the sense of Definition~\ref{defa0} which satisfies \eqref{e2a}. The uniqueness next follows from the comparison principle \cite[Theorem~8.2]{CIL92}.
\end{proof}

\subsection{Large time behaviour}\label{sec:ltb2}

We first recall that Lemma~\ref{lec2} is also valid in that case. It next readily follows from the Lipschitz continuity of $f$ and \eqref{e2a} that
$$
0 \le u(t,x) = u(t,x) - u(t,x_0) \le \frac{\|\nabla f\|_\infty}{(\|\nabla f\|_\infty^{p-2} + t)^{1/(p-2)}}\ |x-x_0|\,, \quad (t,x,x_0)\in [0,\infty)\times \bar{\Omega}\times \partial\Omega\,,
$$
and we proceed as in \cite[Theorem~5]{KK00} to show that Corollary~\ref{coc5} remains true (with a different constant $L_2$). The convergence proof is then the same as that performed in Section~\ref{sec:cv} for $q\in (p-1,p]$.

\subsection{Blowup}\label{sec:bu}

Let us first recall that, by a weak solution to \eqref{a0a}-\eqref{a0c}, we mean a nonnegative function $u\in C([0,\infty)\times\bar{\Omega})$ which belongs to $L^\infty(0,T; W^{1,\infty}(\Omega))$ and satisfies
\begin{equation}\label{z1}
\frac{d}{dt} \int_\Omega u(t,x)\ \psi(x)\ dx = \int_\Omega \left( - |\nabla u(t,x)|^{p-2}\ \nabla u(t,x) \cdot \nabla\psi(x) + |\nabla u(t,x)|^q\ \psi(x) \right)\ dx
\end{equation}
for any $\psi\in H^1_0(\Omega)$ and $T>0$. We now show that such a solution cannot exist for all times if $q>p$ and $u_0$ is sufficiently large.

\medskip

\begin{proposition}\label{prz1}
Assume that $q>p$ and define $r:=q/(q-p)$. There is a positive real number $\kappa$ depending on $\Omega$, $p$, and $q$ such that, if $\|u_0\|_{r+1}>\kappa$,  then \eqref{a0a}-\eqref{a0c} has no global weak solution.
\end{proposition}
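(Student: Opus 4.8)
The plan is to use the classical concavity/eigenfunction argument (as in \cite{HM04,QS07}) adapted to the $p$-Laplacian with a superlinear gradient source. Let $\varphi_1>0$ be the first eigenfunction of $-\Delta_p$ on $\Omega$ with homogeneous Dirichlet boundary conditions, normalized by $\|\varphi_1\|_1=1$, and let $\lambda_1>0$ be the corresponding eigenvalue, so that $-\Delta_p\varphi_1=\lambda_1\varphi_1^{p-1}$ in $\Omega$. Assuming for contradiction that a global weak solution $u$ exists, I would introduce the weighted energy
\[
y(t) := \int_\Omega u(t,x)\,\varphi_1(x)\,dx\,,
\]
which is well-defined and, by the Lipschitz regularity of $u$ in space and continuity in time, absolutely continuous. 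Taking $\psi=\varphi_1$ in \eqref{z1} and integrating the diffusion term by parts against the equation for $\varphi_1$, one gets
\[
y'(t) = -\int_\Omega |\nabla u|^{p-2}\,\nabla u\cdot\nabla\varphi_1\,dx + \int_\Omega |\nabla u|^q\,\varphi_1\,dx\,.
\]
The first term should be handled by ``integrating by parts back onto $\varphi_1$'': since $u$ is Lipschitz and $u=0$ on $\partial\Omega$, one can write $\int_\Omega |\nabla u|^{p-2}\nabla u\cdot\nabla\varphi_1 = \int_\Omega |\nabla \varphi_1|^{p-2}\nabla\varphi_1\cdot\nabla u = \lambda_1\int_\Omega \varphi_1^{p-1} u\,dx$ (this symmetry of the $p$-Laplacian bilinear form is where some care is needed; alternatively one bounds this term by a multiple of $\int_\Omega|\nabla u|^{p-1}\varphi_1$ plus lower-order terms and absorbs it into the source via Young's inequality since $q>p>p-1$). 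Either way one arrives at a differential inequality of the form
\[
y'(t) \ge -C_2\, y(t) + c_1 \int_\Omega |\nabla u|^q\,\varphi_1\,dx
\]
for suitable constants $C_2,c_1>0$ depending only on $\Omega,p,q$.

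The next step is to bound $y(t)$ below by the gradient integral. Here I would use a Poincaré-type inequality with the weight $\varphi_1$: there is $c_\Omega>0$ such that $\int_\Omega w\,\varphi_1\,dx \le c_\Omega \int_\Omega |\nabla w|\,\varphi_1\,dx$ for all nonnegative Lipschitz $w$ vanishing on $\partial\Omega$ (this follows from Hardy/Poincaré-type estimates once one notes $\varphi_1\sim d(\cdot,\partial\Omega)$ near the boundary, or more simply by a one-dimensional integration along rays combined with the known behaviour of $\varphi_1$). Then by Jensen's inequality with the probability measure $\varphi_1\,dx$,
\[
\int_\Omega |\nabla u|^q\,\varphi_1\,dx \ge \left( \int_\Omega |\nabla u|\,\varphi_1\,dx \right)^q \ge c_\Omega^{-q}\, y(t)^q\,.
\]
Plugging this into the differential inequality gives
\[
y'(t) \ge -C_2\, y(t) + c_2\, y(t)^q\,,
\]
with $c_2=c_1 c_\Omega^{-q}>0$. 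Since $q>p>1$, the right-hand side is positive and in fact $y'\ge (c_2/2)\,y^q$ once $y$ is large enough, namely $y > (2C_2/c_2)^{1/(q-1)}=:y_*$; because $q>1$, the superlinear ODE $y'=(c_2/2)y^q$ blows up in finite time, so $y$ cannot remain finite for all $t$. This contradicts the assumed global existence, provided the initial value $y(0)=\int_\Omega u_0\,\varphi_1\,dx$ exceeds $y_*$.

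Finally I would convert the condition $y(0)>y_*$ into the stated condition $\|u_0\|_{r+1}>\kappa$. Since $q>p$ and $r=q/(q-p)$, Hölder's inequality (with $\varphi_1\in L^{(r+1)/r}(\Omega)$, which holds as $\varphi_1$ is bounded and $\Omega$ bounded) does \emph{not} directly give a lower bound on $y(0)$ in terms of $\|u_0\|_{r+1}$; rather, the correct route is the reverse: one shows that the whole argument can be rerun with $\|u(t)\|_{r+1}$ in place of $y(t)$ by testing \eqref{z1} against $u^r$ (legitimate since $u\in W^{1,\infty}$), which yields $\frac{d}{dt}\|u\|_{r+1}^{r+1}$ controlled below by a superlinear power of itself after using the gradient source and Sobolev/Poincaré; the exponent $r=q/(q-p)$ is exactly the one making the diffusion term $\int |\nabla u|^{p-2}\nabla u\cdot\nabla(u^r)=c\int|\nabla(u^{(r+p-1)/p})|^p$ absorbable by $\int |\nabla u|^q u^r$ via Young and Sobolev. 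The precise bookkeeping of the exponents and the extraction of the explicit threshold $\kappa=\kappa(\Omega,p,q)$ is the routine but slightly delicate part; I expect the main obstacle to be carrying out the integration by parts and the Young-inequality absorption rigorously at the level of weak (merely Lipschitz) solutions, rather than any conceptual difficulty, since the blow-up mechanism itself is the standard superlinear-ODE comparison.
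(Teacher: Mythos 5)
Your proposal spends most of its effort on an eigenfunction argument that cannot work here, and only at the very end pivots to the approach the paper actually uses. The eigenfunction route has two independent problems. First, the $p$-Laplacian bilinear form $\int_\Omega|\nabla u|^{p-2}\nabla u\cdot\nabla\varphi_1\,dx$ is \emph{not} symmetric for $p\neq 2$, as you half-acknowledge; your proposed fallback, bounding it by a multiple of $\int_\Omega|\nabla u|^{p-1}\varphi_1\,dx$, fails because the honest Cauchy--Schwarz bound is $\int_\Omega|\nabla u|^{p-1}|\nabla\varphi_1|\,dx$, and near $\partial\Omega$ one has $|\nabla\varphi_1|\sim 1$ while $\varphi_1\sim d(\cdot,\partial\Omega)\to 0$, so $|\nabla\varphi_1|$ is not controlled by $\varphi_1$ and the absorption into the weighted source term cannot happen. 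Second, even granting all of this, the quantity $y(t)=\int_\Omega u\varphi_1\,dx$ yields a blowup threshold on $\int_\Omega u_0\varphi_1\,dx$, and H\"older goes the wrong way: it gives $\int u_0\varphi_1\le\|u_0\|_{r+1}\|\varphi_1\|_{(r+1)/r}$, so largeness of $\|u_0\|_{r+1}$ does not force largeness of $y(0)$. You notice this, which is to your credit, but it means the entire preceding construction has to be discarded.

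Your final paragraph does name the right strategy --- test \eqref{z1} against $u^r$ with $r=q/(q-p)$, the exponent that makes the diffusion term absorbable into the source --- and this is indeed how the paper proceeds. However, you leave the argument entirely unexecuted, flagging the ``bookkeeping'' as routine while identifying the integration by parts and Young absorption as the ``main obstacle.'' In fact the absorption is simpler than you suggest: no Sobolev embedding is needed there. After testing with $u^r$, one gets $\frac{1}{r+1}\frac{d}{dt}\|u\|_{r+1}^{r+1}=\int u^r|\nabla u|^q\,dx-\frac{q}{q-p}\int u^{r-1}|\nabla u|^p\,dx$, and the choice of $r$ makes $(r-1)q/p=r$, so H\"older with exponents $q/p$ and $q/(q-p)$ gives $\int u^{r-1}|\nabla u|^p\le|\Omega|^{(q-p)/q}\bigl(\int u^r|\nabla u|^q\bigr)^{p/q}$, which Young's inequality absorbs into a fraction of the source plus a constant. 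Only then does one write $\int u^r|\nabla u|^q\,dx=c\int\bigl|\nabla\bigl(u^{(q-p+1)/(q-p)}\bigr)\bigr|^q\,dx$ and invoke Poincar\'e in $W^{1,q}_0$ to get a lower bound $\kappa_1\|u\|_{r+q}^{r+q}-\kappa_2$, which H\"older converts into a superlinear ODE for $\|u\|_{r+1}^{r+1}$. Since none of this is carried out, the proposal as written is not a proof; the correct idea is present but only as a closing remark, while the body of the argument rests on a flawed construction.
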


\begin{proof}
We argue as in \cite[Theorem~2.4]{HM04} and use classical approximation arguments to deduce from \eqref{z1} and H\"older's and Young's inequalities that
\begin{eqnarray*}
\frac{1}{r+1}\ \frac{d}{dt}\|u\|_{r+1}^{r+1} & = & \int_\Omega u^r\ |\nabla u|^q\ dx - \frac{q}{q-p}\ \int_\Omega u^{r-1}\ |\nabla u|^p\ dx \\
& \ge & \int_\Omega u^r\ |\nabla u|^q\ dx - \frac{q}{q-p}\ |\Omega|^{(q-p)/q}\ \left( \int_\Omega u^r\ |\nabla u|^q\ dx \right)^{p/q} \\
& \ge & \int_\Omega u^r\ |\nabla u|^q\ dx - \frac{p}{q}\ \int_\Omega u^r\ |\nabla u|^q\ dx - \left( \frac{q}{q-p} \right)^{p/(q-p)}\ |\Omega| \\
& \ge & \frac{q-p}{q}\ \int_\Omega u^r\ |\nabla u|^q\ dx - \left( \frac{q}{q-p} \right)^{p/(q-p)}\ |\Omega|  \\
& \ge & \frac{q-p}{q}\ \left( \frac{q-p}{q-p+1} \right)^q\
\int_\Omega \left| \nabla\left( u^{(q-p+1)/(q-p)} \right)
\right|^q\ dx - \left( \frac{q}{q-p} \right)^{p/(q-p)}\ |\Omega|
\end{eqnarray*}
We now use the Poincar\'e inequality to obtain that
$$
\frac{1}{r+1}\ \frac{d}{dt}\|u\|_{r+1}^{r+1} \ge \kappa_1\ \int_\Omega u^{r+q}\ dx -\kappa_2
$$
for some constants $\kappa_1>0$ and $\kappa_2>0$ depending only on $\Omega$, $p$, and $q$. Since $q>1$, we use again H\"older's inequality to deduce
$$
\frac{1}{r+1}\ \frac{d}{dt}\|u\|_{r+1}^{r+1} \ge
\frac{\kappa_1}{|\Omega|^{(q-1)/(r+q)}}\ \|u\|_{r+1}^{r+q} -
\kappa_2\,.
$$
Since $q>1$, this clearly contradicts the global existence as soon as $\|u_0\|_{r+1}$ is sufficiently large.
\end{proof}

\subsection*{Acknowledgements}
The authors would like to thank Matteo Bonforte and Michael Winkler for helpful discussions and comments. This work was done during a visit of Ph.~Lauren\c{c}ot to the Fakult\"{a}t f\"{u}r Mathematik of the Universit\"{a}t Duisburg-Essen and while C.~Stinner held a one month invited position at the Institut de Math\'{e}matiques de Toulouse, Universit\'{e} Paul Sabatier - Toulouse III. We would like to express our gratitude for the invitation, support, and hospitality.



\end{document}